\documentclass[12pt]{amsart}

\usepackage{fullpage, graphicx, enumitem}
\usepackage{url}
\usepackage{verbatim}
\usepackage{color}

\usepackage{bbm}
\usepackage{amssymb,amsmath,amsthm,mathrsfs,mathtools,thmtools,latexsym}
\usepackage{amsfonts,savesym,graphicx,bm,amsthm,stmaryrd}
\usepackage[mathscr]{eucal}
\usepackage[all]{xy}
\usepackage{lipsum}
\usepackage{rotating}
\usepackage{float}
\usepackage{gensymb}
\usepackage{upgreek}
\usepackage[thinc]{esdiff}
\usepackage{tikz-cd}
\usepackage{pgfplots}
\pgfplotsset{compat=newest}
\usepackage{xcolor,textpos}
\usepackage{helvet}

\definecolor{hot}{RGB}{65,105,225}
\usepackage[pagebackref=true,colorlinks=true, linkcolor=hot ,  citecolor=hot, urlcolor=hot]{hyperref}
\usepackage{caption}
\usepackage[subrefformat=parens, labelfont=up]{subcaption}

\newtheorem{theorem}{Theorem}[section]
\newtheorem{lemma}[theorem]{Lemma}
\newtheorem{conjecture}[theorem]{Conjecture}

\newtheorem{proposition}[theorem]{Proposition}

\newtheorem{openquestion}[theorem]{Open Question}

\theoremstyle{definition}
\newtheorem{definition}[theorem]{Definition}

\theoremstyle{remark}

\newtheorem{remark}[theorem]{Remark}
\newtheorem{example}[theorem]{Example}

\numberwithin{equation}{section}

\newcommand{\Div}[1]{\operatorname{div}\left(#1\right)}

\newcommand{\Zeta}[3]{Z_{\operatorname{top},#3}(#1;#2)}
\newcommand{\Zetam}[3]{Z_{\operatorname{mot},#3}(#1;#2)}
\newcommand{\Zetaw}[4]{Z_{\operatorname{top},#3}(#1,#4;#2)}
\newcommand{\Zetamw}[4]{Z_{\operatorname{mot},#3}(#1,#4;#2)}

\newcommand{\C}{\mathbb{C}}

\newcommand{\Q}{\mathbb{Q}}
\newcommand{\Pm}{\mathbb{P}}

\newcommand{\A}{\mathbb{A}}

\newcommand{\LA}{\mathbb{L}}
\newcommand{\M}{\mathcal{M}_\C}

\newcommand{\GR}{K_0(\operatorname{Var}_\C)}

\author{Lise Fonteyne}
\address{Department of Mathematics, KU Leuven, Celestijnenlaan 200B, 3001 Leuven, Belgium.}
\email{lise.fonteyne@kuleuven.be}

\author{Willem Veys}
\address{Department of Mathematics, KU Leuven, Celestijnenlaan 200B, 3001 Leuven, Belgium.}
\email{wim.veys@kuleuven.be}

\thanks{L. Fonteyne was supported by long term
	structural funding (Methusalem grant) by the Flemish Government.  W. Veys was supported by KU Leuven Grant C16/23/010.
}

\title{On a generalized monodromy conjecture for curves using differential forms}

\begin{document}
\begin{abstract}
Motivic and topological zeta functions are singularity invariants, mainly associated to a function $f$ and a top differential form $\omega$ on a smooth variety. When $\omega$ is the standard form $dx_1\wedge \dots \wedge dx_n$ on affine $n$-space, the monodromy conjecture states that poles of these zeta functions should induce monodromy eigenvalues of $f$.
We study natural generalized statements of the monodromy conjecture for functions $f$ on complex surface germs; more precisely on singular surfaces for forms $\omega$ that generalize the standard form, and on the affine plane for forms $\omega$ that are intrinsically associated to $f$.

For all cases, we provide counterexamples to the statement. In addition, when the intrinsically associated $\omega$ is given by the generic polar of $f$, we discover a relation between the poles of the zeta functions and the intersection behaviour of the polar curve.
\end{abstract}

\maketitle

\section{Introduction}

The motivic and topological zeta functions are important singularity invariants of a polynomial function $f\in \C[x_1,\dots,x_n]\setminus \C$, more precisely of the hypersurface defined by $f$ in affine $n$-space.  Local versions are associated to the germ of $f$ at the origin $o$ of affine space, assuming $f(o)=0$.
The local motivic zeta function $\Zetam{f}{s}{o}$ of $f$ is defined as the formal generating series for classes (in the Grothendieck ring of algebraic varieties) of $m$-jets on $\A^n$ that have contact $m$ with $f$.
In fact, $\Zetam{f}{s}{o}$ turns out to be a rational function.
Using a formula for $\Zetam{f}{s}{o}$ in terms of an embedded resolution of singularities for $f^{-1}\{0\}$, it specializes to the topological zeta function $\Zeta{f}{s}{o}$, which is just a rational function in the variable $s$ with coefficients in $\Q$.
See Subsection \ref{zetafunctions} for the exact definitions.

The famous monodromy conjecture asserts that poles of $\Zetam{f}{s}{o}$ or $\Zeta{f}{s}{o}$ should induce monodromy eigenvalues of $f$, see Subsection \ref{monconj}.
It was proven by Loeser when $n=2$ \cite{loeser}, but it is still wide open for $n\geq 3$.

\medskip

The formula for the topological zeta function in the case of  plane curves is as follows.
Let $f \in \C[x,y] \setminus \C$, and assume that $f(o)=0$. Let $h:Y \rightarrow U$ be an embedded resolution of the germ at the origin of $f^{-1}\{0\}\subset \A^2$, that is,
$U$ is a small enough neighbourhood of $o$ in $\A^2$, $h$ is a proper birational morphism,  $Y$ is smooth, the restriction of $h$ to $Y \setminus h^{-1}(f^{-1}\{0\})$ is an isomorphism and $h^{-1}(f^{-1}\{0\})=\cup_{j \in T} E_j$ is a simple normal crossings divisor, i.e., the components $E_j$ are non-singular curves that intersect transversally.

	 The \emph{numerical data} of each component $E_j$ are $(N_j,\nu_j)$, where $N_j$ and $\nu_j-1$ are the multiplicities of $E_j$ in the divisor defined by $f \circ h$ and $h^*(dx \wedge dy)$, respectively. So
	$$\Div{f \circ h} = \sum_{j \in T} N_jE_j \quad\text{ and } \quad\Div{h^*(dx \wedge dy)} = \sum_{j \in T} (\nu_j-1)E_j.$$
Note that all $N_j$ and $\nu_j$ are positive integers.

We write $T = T_e \sqcup T_s$, where $T_e$ runs over the exceptional components and $T_s$ over the components of the strict transform of $f^{-1}\{0\}$, and we set $E_i^\circ = E_i\setminus \cup_{\ell\in T\setminus\{i\}}E_\ell$ for $i\in T_e$.
Then the \emph{(local) topological zeta function} of $f$  is
	\begin{equation}\label{formula-intro}
		\Zeta{f}{s}{o} 
= \sum_{j\in T_e} \frac{\chi(E_j^\circ)}{\nu_j +N_js} + \sum_{i \neq j\in T} \frac{\chi(E_i \cap E_j)}{(\nu_i +N_is)(\nu_j+N_js)},
	\end{equation}
	where $\chi(\cdot)$ denotes the topological Euler characteristic.
The statement of the monodromy conjecture is as follows. \emph{If $s_0$ is a pole of $\Zeta{f}{s}{o}$,  then $e^{2\pi i  s_0}$ is a monodromy eigenvalue of $f$ at some point of $f^{-1}\{0\}$ close to $o$.}

We recall the monodromy concept and state the general versions of the conjecture in Subsection \ref{monconj}.

\medskip
It is a natural question to look for some  generalization of the monodromy conjecture for a function (germ) $f$ on a  singular ambient variety. In dimension two, the first case to investigate is an ambient normal surface $S$. The monodromy concept is the same as for functions on smooth surfaces, but it is a priori not clear what the generalization of the zeta functions should be.

Let us look at the ingredients in formula (\ref{formula-intro}).  We can consider an embedded resolution $h:Y \rightarrow S$  of $f^{-1}\{0\}\subset S$, and use the same notations $E_j,j\in T,$ and  $\Div{f \circ h} = \sum_{j \in T} N_jE_j $.  But what about the $\nu_j$?
There are two natural generalizations.

\medskip
A first possibility is to view $\Div{h^*(dx \wedge dy)} = \sum_{j \in T} (\nu_j-1)E_j$ above as the relative canonical divisor $K_{Y|\A^2}$, and generalize it accordingly to
the relative canonical divisor $K_{Y|S}$, that is, view the $\nu_j$ as log discrepancies. Note that then these generalized $\nu_j$ are in general {\em  rational} numbers, that can be positive, negative or zero.

This  approach was initiated by the second author in \cite{veys2}. There he showed that the naive straightforward generalization of the  conjecture is not true in general, and proposed to investigate the following adaptation for normal surface germs. Let $d$ denote the least common denominator of the log discrepancies $\nu_j$, $j \in T_e$ (which is independent of the chosen embedded resolution).
{\em Question: if $s_0$ is a pole of $\Zeta{f}{s}{o}$ or $\Zetam{f}{s}{o}$, is then $e^{2\pi i d s_0}$ a monodromy eigenvalue of $f$ at some point of $f^{-1}\{0\}$ close to $o$?}

Rodrigues proved in \cite{rodrigues} that this question has a positive answer in many cases, for instance in the setting of the theorem below.
But he also gave counterexamples, see {\cite[Examples 3.5 and 3.6]{rodrigues}}
\begin{theorem}[{\cite[Theorem 3.2]{rodrigues}}]\label{mon gen}
	Let $f$ be a non-constant regular function on a normal surface germ $(S,o)$, where $f(o)=0$. Suppose all the log discrepancies $\nu_j$, $j\in T_e,$ are integral. If $s_0 \in \Q_{\leq 0}$ is a pole of $\Zeta{f}{s}{o}$ or $\Zetam{f}{s}{o}$, then $e^{2\pi i s_0}$ is a monodromy eigenvalue of $f$ at some point of $f^{-1}\{0\}$ close to $o$.
\end{theorem}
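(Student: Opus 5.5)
Our plan follows Loeser's strategy for plane curves, adapted to the normal surface setting. We use A'Campo's formula for the monodromy zeta function and a combinatorial analysis of the resolution graph.

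\textbf{Reduction and A'Campo.} Fix the minimal embedded resolution $h:Y\to S$ of $f^{-1}\{0\}$. Since $\nu_j$ and the poles do not depend on the chosen resolution, this is harmless. A candidate pole of $\Zeta{f}{s}{o}$ has the form $s_0=-\nu_j/N_j$ for some $j\in T$. For $j\in T_s$ we have $\nu_j=1$, so $s_0=-1/N_j$. Then $e^{2\pi i s_0}$ is an eigenvalue of the monodromy at a generic point of the corresponding branch of $f^{-1}\{0\}$ near $o$, where $f$ is locally $u^{N_j}$.

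So assume $s_0=-\nu_j/N_j$ comes only from exceptional $E_j$. The local monodromy zeta function at $o$ is given by A'Campo's formula
\[\prod_{j\in T_e}(1-t^{N_j})^{\chi(E_j^\circ)},\]
which holds for any embedded resolution of $f$ on the normal surface $S$, since it only uses the structure near $h^{-1}(o)$. It therefore suffices to find $j$ with $N_j\,|\,$(denominator-compatible) $\nu_j$-condition, namely $e^{-2\pi i\nu_j/N_j}=e^{2\pi i s_0}$, and with $\chi(E_j^\circ)<0$. Alternatively, one can use that $e^{2\pi i s_0}$ is an eigenvalue whenever $s_0 N_j\in\mathbb Z$ for some $E_j$ with $\chi(E_j^\circ)\neq 0$, after cancellations are ruled out. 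The standard route avoids cancellation by using the monodromy action on $H^1$ of the Milnor fibre. By the Denef and Loeser / Loeser argument, $e^{2\pi i s_0}$ is an eigenvalue as soon as some $E_j$ with $s_0N_j\in\mathbb Z$ has $\chi(E_j^\circ)<0$, or meets a strict transform component.

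\textbf{Key steps.} First, we write the residue (or the order of pole) contributions of the set $J=\{j: -\nu_j/N_j=s_0\}$. Second, we show that if every $E_j$ with $j\in T_e$ and $N_js_0\in\mathbb Z$ has $\chi(E_j^\circ)\ge 0$ and meets no strict transform, then $s_0$ cancels. Such $E_j$ with $\chi(E_j^\circ)\geq0$ have at most two intersection points with other components, counted with genus zero. So they form chains, or end points, in the dual graph. Here we need the relations valid on $Y$. With $E_j^2=-b_j$ and $E_j\cong\mathbb P^1$, adjunction gives
\[\sum_{i\ne j}N_iE_i\cdot E_j=b_jN_j,\qquad \sum_{i\neq j}(\nu_i-1)E_i\cdot E_j=b_j(\nu_j-1)+2-2g_j-\ldots\]
More precisely, $\sum_{i\neq j}(\nu_i-1)E_iE_j = b_j\nu_j -2+2g_j$ after moving terms, with $\nu_j$ the log discrepancies. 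These are exactly the same linear relations as in the smooth case, because $K_{Y|S}$ and $\Div{f\circ h}$ are numerically determined by the intersection matrix. Integrality of $\nu_j$ is used here: Loeser's argument computes residues of chains as in the classical case, and the congruence arguments ($\alpha_i=\nu_i+s_0N_i$ being integers for neighbours) require integral $\nu_i$. With these relations, the classical computation of the residue along a chain $E_{j_1},\dots,E_{j_r}$ with end components having $\chi(E^\circ)$ of given sign goes through verbatim. It shows that contributions of chains/ends with $s_0N_j\in\mathbb Z$ sum to zero unless a "rupture" component ($\chi(E_j^\circ)<0$) or a strict transform component with $N_js_0\in\mathbb Z$ is involved.

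\textbf{Main obstacle.} The hard step is the chain/end cancellation. In the smooth case it uses that $\nu_j>0$ and the arithmetic of $\alpha_i=\nu_i-\nu_jN_i/N_j$ along chains (the numbers $\alpha$ are integers in $(-1,1)$-type ranges leading to vanishing). In our setting $\nu_j$ may be $\le 0$ and the genus $g_j$ of exceptional curves may be positive. The hypothesis $s_0\le 0$ and integrality of $\nu_j$ must replace positivity. Also, positive genus curves automatically have $\chi(E_j^\circ)<0$ unless they are isolated; such curves are handled directly by A'Campo, since their contribution to the monodromy zeta cannot cancel by Loeser's $H^1$-argument.

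We would first try to reproduce Loeser's proof, verifying each inequality used about the $\alpha_i$ under only integrality. Where positivity fails, we would use that $N_j>0$ and $s_0\leq0$ give the needed signs for the relevant $\alpha$.
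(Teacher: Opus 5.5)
\textbf{There is a genuine gap: the monodromy half of the proof rests on a criterion that is false.} You assert, attributing it to Denef--Loeser/Loeser, that $e^{2\pi i s_0}$ is an eigenvalue as soon as some exceptional $E_j$ with $s_0N_j\in\mathbb Z$ has $\chi(E_j^\circ)<0$ or meets the strict transform. This already fails for the plane cusp $f=y^2-x^3$. Its rupture curve has $N=6$ and $\chi(E^\circ)=-1$, and it meets the strict transform. Yet for $s_0=-\frac12$ or $-\frac13$ the number $e^{2\pi i s_0}$ is not an eigenvalue: $P_{1,o}/P_{0,o}=\Phi_6/\Phi_1$, and $f$ is smooth at all other points.

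The reason is that A'Campo's formula is an alternating product. For $\lambda\neq 1$ the multiplicity of $\lambda$ is the signed sum $\sum_{j\in T_e,\,\lambda^{N_j}=1}\bigl(-\chi(E_j^\circ)\bigr)$, so end curves of chains (with $\chi=1$) can cancel rupture curves. Proving that this cancellation does not happen for the particular $\lambda=e^{2\pi i s_0}$ coming from a pole is the real content of the theorem, and your proposal has no argument for it.

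Your ``key step'' therefore proves the wrong thing. That a pole must come from a strict transform component, or from an exceptional curve with ratio $-s_0$ and $\chi(E_j^\circ)<0$, is the easy residue side. It follows from Lemma~\ref{num lemma} and Proposition~\ref{contribution}, plus the fact that equal ratios propagate along chains for order-two poles. Also, only curves with $\nu_j/N_j=-s_0$ enter the residue at $s_0$, not all curves with $N_js_0\in\mathbb Z$. So the ``main obstacle'' you identify is misplaced.

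What you need is a non-cancellation argument. With integral log discrepancies, $\lambda^{N_j}=e^{2\pi i(\nu_j+s_0N_j)}$, so $\lambda^{N_j}=1$ iff $\alpha_j=\nu_j+s_0N_j\in\mathbb Z$. This is where integrality genuinely enters: it makes $\lambda$ an $N_0$-th root of unity for the pole-producing $E_0$ in the first place.

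Let $\Gamma_\lambda=\{j\in T_e:\lambda^{N_j}=1\}$. Using $\kappa_jN_j=\sum_{k\neq j}N_k\,E_j\cdot E_k$ and negative definiteness, each connected component $C$ of $\Gamma_\lambda$ meets the remaining components in at least two points. The only exception is when $\lambda^{N_k}=1$ for some strict transform branch, and that case already yields the eigenvalue. Hence $C$ contributes $e_{\mathrm{out}}(C)-2+2\sum_{j\in C}g_j+2b_1(C)\ge 0$, and you must show that some component (in practice the one containing $E_0$) contributes strictly positively.

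A dead branch at $E_0$ stays outside $\Gamma_\lambda$, since $\alpha_1=1/\Delta_{1,r}\notin\mathbb Z$. On a singular surface, however, other subtrees free of strict transform can lie entirely in $\Gamma_\lambda$ and lower the count by one each. An example is the configuration obtained by resolving a cusp of an exceptional curve: a $(-1)$-curve meeting $E_0$, a $(-2)$-curve and a $(-3)$-curve. It always has $\alpha=-1$ at its centre and multiplicities $6N_0,3N_0,2N_0$. Ruling out that such subtrees absorb the whole positive contribution of $E_0$ is exactly where the nonvanishing residue and the hypothesis $s_0\le 0$ must do real work. In your proposal, $s_0\le0$ appears only in the final placeholder sentence.
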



The second possibility is to consider some \lq natural\rq\ regular differential $2$-form $\omega$ on $S$, generalizing  $dx\wedge dy$ on $\A^2$, and consider
$\Div{h^*(\omega)} = \sum_{j \in T} (\nu_j-1)E_j$.
This was not investigated yet. In Section \ref{singular}, we explore this approach. 

More precisely, a natural generalization of $dx\wedge dy$ is obtained via an embedding of $S$ in some affine space $\A^m$, namely (the restriction to $S$ of)  a form $d\ell_1\wedge d\ell_2$, where $\ell_1$ and $\ell_2$ are generic linear forms on $\A^m$.
In fact, in this setting there is no reason to assume that $S$ is normal (in the first approach, this was motivated by considering the relative canonical divisor). We can as well assume that $S$ is the germ of an isolated surface singularity.

\medskip
Returning to  $\A^2$, zeta functions associated to $f$ and a form $gdx\wedge dy$, with $g$ another polynomial, were already studied in relation with monodromy. For \lq most\rq\ $g$, these zeta functions have poles that do not induce monodromy eigenvalues of $f$. In \cite{nemethiveys}, families of forms $gdx\wedge dy$ were determined for which all poles of the associated zeta functions do induce monodromy eigenvalues of $f$, where the $g$ are given by combinatorial conditions, depending on the minimal embedded resolution graph of $f$.

A not yet investigated question in this setting is whether we can consider a $2$-form on $\A^2$, that is {\em intrinsically associated to the polynomial $f$ itself}, and propose for the zeta functions associated to such a form the analogue of the original monodromy conjecture. Natural intrinsic forms are $g dx\wedge dy$, where $g$ is the hessian of $f$ or a generic polar of $f$. We investigate this in Section \ref{planecurves}.

\medskip
The outcome is that we provide counterexamples to the generalized monodromy conjecture statement in each case, see Sections \ref{singular} and \ref{planecurves}.
In addition, in the case of a generic polar $g$, we discover a remarkable link between poles of the associated zeta function and the intersection behaviour of the strict transform of the polar curve with the exceptional locus of the minimal resolution of $f$. In this context we formulate an open question of independent interest, intrinsically about polar corves.


\section{Preliminaries}

\subsection{Zeta functions}\label{zetafunctions}

Both the topological and motivic zeta function originate from the $p$-adic Igusa zeta function. For more motivation and history, we refer to \cite{veys} and the references therein.
In order to define the motivic zeta function, we first briefly introduce the Grothendieck ring of complex varieties.

\begin{definition}
	The \emph{Grothendieck ring of complex varieties} $\GR$ is the quotient of the free abelian group generated by the symbols $[X]$, where $X$ runs over all complex varieties, by the relations
	$	[X] = [Y] $, if  $X \cong Y$, and
	$	[X] = [Y] + [X \setminus Y]$,  if  $ Y$ is Zariski-closed in  $X$.

	There is a natural commutative ring structure on $\GR$ given by $[X] \cdot [Y] = [X \times Y]$. We set $\LA := [\A^1]$ and $\M:=\GR_\LA$, the ring obtained from $\GR$ by inverting $\LA$.
\end{definition}

	Let $f \in \C[x_1,\hdots,x_n] \setminus \C$, such that   $f(o)=0$.

\begin{definition}  Denote by $C_{m,o}(f)$ the {\em $m$-contact locus of $f$}, being the subvariety of the space of $m$-jets on $\C^n$, attached at the origin, that have contact order precisely $m$ with $\Div{f}$. The (local) motivic zeta function of $f$ is
$$\Zetam{f}{s}{o} := \sum_{m\geq 1} [C_{m,o}(f)] \LA^{-mn} (\LA^{-s})^m
\in \M[[\LA^{-s}]],$$
where $\LA^{-s}$ is a formal variable.
\end{definition}
The notation $\LA^{-s}$ arose from the analogy with the $p$-adic setting; the $p$-adic Igusa zeta function is given by a similar series in $p^{-s}$.
We refer to e.g. \cite{denefloeser2, cns} for details. We will not use the definition in the sequel.

\smallskip
There is a formula for $\Zetam{f}{s}{o}$ in terms of an embedded resolution  $h:Y \rightarrow U$  of the germ at the origin of $\Div{f}\subset \A^n$. Generalizing notation from the introduction, $h$ is a proper birational morphism,  $U$ is a small enough neighbourhood of $o$ in $\A^n$, $Y$ is smooth, the restriction of $h$ to $Y \setminus h^{-1}(f^{-1}\{0\})$ is an isomorphism and $h^{-1}(f^{-1}\{0\})=\cup_{j \in T} E_j$ is a simple normal crossings divisor, i.e., the components $E_j$ are non-singular hypersurfaces that intersect transversally.

 The \emph{numerical data} of each component $E_j$ are $(N_j,\nu_j)$, where $N_j$ and $\nu_j-1$ are the multiplicities of $E_j$ in the divisor defined by $f \circ h$ and $h^*(dx \wedge \hdots \wedge dx_n)$, respectively.
	We also set  $E^\circ_I = \cap_{i \in I} E_i \setminus \cup_{k \notin I} E_k$ for every subset $I$ of $T$.
	Note that $Y$ is the disjoint union of all $E_I^\circ$, $I \subset T$. To simplify notation, we set 
 $E_i^\circ = E_{\{i\}}^\circ$.

\begin{theorem} (\cite{denefloeser2}) Using notation above, we have that
$$	\Zetam{f}{s}{o} := \sum_{I \subset T} [E_I^\circ \cap h^{-1}(0)]\prod_{i \in I} \frac{\LA-1}{\LA^{\nu_i+sN_i}-1}.
$$
\end{theorem}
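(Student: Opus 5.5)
The plan is to compute the motivic measure of the contact loci through the change of variables formula for motivic integration, applied to the embedded resolution $h:Y\to U$. First I will rewrite the zeta function as a motivic integral over the arc space. Let $\mathcal{L}_o(\A^n)$ be the arcs based at $o$, with motivic measure $\mu$ normalized so that cylinders with $m$-jet image $C$ have measure $[C]\LA^{-(m+1)n}$ (or a fixed normalization convention). Then, up to this normalization and the $m=0$ term,
\[
\Zetam{f}{s}{o}=\sum_{m\geq 1}\mu\{\gamma: \operatorname{ord}_t f(\gamma)=m\}\,\LA^{-sm}=\int_{\mathcal{L}_o(\A^n)}\LA^{-s\,\operatorname{ord}_t f}\,d\mu .
\]
The constant shift between the jet-count normalization and $\mu$ is harmless and gets fixed at the end.

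Next I apply the change of variables formula of Denef--Loeser (Kontsevich) to the proper birational $h$. Arcs on $Y$ lifting arcs on $U$ agree up to a measure-zero set, and
\[
\int_{\mathcal{L}_o(\A^n)}\LA^{-s\,\operatorname{ord}f}\,d\mu=\int_{\mathcal{L}(Y)_{h^{-1}(0)}}\LA^{-s\,\operatorname{ord}(f\circ h)-\operatorname{ord}\operatorname{Jac}_h}\,d\mu_Y ,
\]
where $\operatorname{Jac}_h$ is the order of $h^*(dx_1\wedge\dots\wedge dx_n)$, with divisor $\sum(\nu_i-1)E_i$.

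Then I stratify $\mathcal{L}(Y)_{h^{-1}(0)}$ by the stratum $E_I^\circ\cap h^{-1}(0)$ containing the base point $\gamma(0)$, and by the contact orders $k_i\geq1$ with $E_i$ for $i\in I$. Near a point of $E_I^\circ$, pick local coordinates $y_i$ with $f\circ h=u\prod_{i\in I}y_i^{N_i}$ and Jacobian $v\prod_{i\in I} y_i^{\nu_i-1}$, where $u$ and $v$ are units. Using a Zariski-locally-trivial argument, the set of arcs with $\gamma(0)\in E_I^\circ$ and $\operatorname{ord}y_i=k_i$ has measure $[E_I^\circ\cap h^{-1}(0)](\LA-1)^{|I|}\LA^{-n-\sum k_i}$. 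Summing the geometric series over the $k_i$ gives
\[
\prod_{i\in I}\sum_{k\ge1}(\LA-1)\LA^{-k}\LA^{-k(\nu_i-1)-ksN_i}=\prod_{i\in I}\frac{\LA-1}{\LA^{\nu_i+sN_i}-1},
\]
which yields the stated formula once the overall factor $\LA^{-n}$ is matched with the normalization.

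The main obstacle is the rigorous justification of two points. The first is the change of variables formula itself, in particular the measurability and the fibration property of $h_\ast$ over cylinders of fixed Jacobian order. The second is that the local computation globalizes: the units $u$ and $v$ do not affect the orders, but the piecewise trivial fibration over $E_I^\circ$ must be checked. For both I would simply cite the Denef--Loeser framework, since the paper itself attributes the result to \cite{denefloeser2}.
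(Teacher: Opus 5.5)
Your argument is correct and is the standard Denef--Loeser proof: change of variables along $h$, then stratification of arcs by the stratum $E_I^\circ\cap h^{-1}(0)$ containing the base point and by the contact orders with the $E_i$, then summing geometric series. The paper gives no proof and simply cites \cite{denefloeser2}, where this argument appears. Your normalization also checks out exactly: $[C_{m,o}(f)]\LA^{-mn}$ equals $\LA^{n}$ times the $\mu$-measure of the cylinder $\{\gamma(0)=o,\ \operatorname{ord}_t f=m\}$, and this factor cancels the $\LA^{-n}$ coming from the measures of the strata.
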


The topological zeta function of $f$, first introduced in \cite{denefloeser}, turned out to be a specialization of the motivic one, see  \cite[2.3]{denefloeser2}.

\begin{definition}
	The \emph{(local) topological zeta function} of $f$ is
\begin{equation}
		\Zeta{f}{s}{o} := \sum_{I \subset T} \chi(E_I^\circ \cap h^{-1}(0)) \prod_{i \in I}\frac{1}{\nu_i+N_is}  \in \Q(s),
	\end{equation}
where $\chi(.)$ denotes the topological Euler characteristic.
\end{definition}


For $n=2$,  the second author determined in \cite[Theorem 4.3]{veys3} all the poles of these zeta functions.
\begin{theorem}\label{all poles}
	Let $h:Y \rightarrow \A^2$ be the minimal embedded resolution of the germ of $\Div{f}$ at $o$. We have that $s_0$ is a pole of $\Zeta{f}{s}{o}$ or $\Zetam{f}{s}{o}$ if and only if $s_0 = -\frac{\nu_j}{N_j}$ for some component $E_j$ of the strict transform or for some exceptional component $E_j$ intersecting at least three times other components.
\end{theorem}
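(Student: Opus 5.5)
We would prove Theorem \ref{all poles} by working directly with formula (\ref{formula-intro}) for the minimal embedded resolution, treating each candidate pole $s_0$ separately. The \lq\lq only if\rq\rq\ direction is immediate: every term in (\ref{formula-intro}) has denominator a product of factors $\nu_j+N_js$. Grouping terms, the possible poles are the values $-\nu_j/N_j$. For an exceptional $E_j$ that meets exactly one or two other components, we would show that the contributions of $E_j$ cancel, namely the term $\chi(E_j^\circ)/(\nu_j+N_js)$ (with $\chi(E_j^\circ)=2-r_j$, $r_j\in\{1,2\}$ the number of intersection points) and the terms for its intersections. For this we use the standard relations of plane curve resolutions: if $E_j$ meets $E_1,\dots,E_r$, then $\sum_i N_i = -E_j^2 N_j$ and $\sum_i (\nu_i-1) = -E_j^2\nu_j - 2$, i.e. $\sum_i(\nu_i - \alpha_jN_i) = -2 + r_j + \alpha_j E_j^2 N_j\cdots$ evaluated at $\alpha_j=\nu_j/N_j$, giving $\sum_{i}(\nu_i+N_is_0)=r_j-2$ at $s_0=-\nu_j/N_j$. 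The residue at $s_0$ of the $E_j$-part is then proportional to $(2-r_j)+\sum_i 1/(\nu_i+N_is_0)$. For $r_j=1$ the relation gives $\nu_1+N_1s_0=-1$, and for $r_j=2$ the two values sum to $0$, so in both cases the residue vanishes. This shows candidates coming only from such $E_j$ are not poles, unless the same value is also attained by another component, which is harmless for the \lq\lq only if\rq\rq\ statement.

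For the \lq\lq if\rq\rq\ direction we would fix $s_0=-\nu_j/N_j$ with $E_j$ either a strict transform component or an exceptional component with $r_j\ge 3$. We compute the residue $R_j$ of the $E_j$-part: for $r_j\geq 3$ it is $\frac{1}{N_j}\bigl(2-r_j+\sum_i \frac{1}{\alpha_i}\bigr)$ with $\alpha_i=\nu_i+N_is_0$, and for a strict transform it is $\frac{1}{N_jN_i\cdot}\!$\,(a single nonzero term). Several components may share the same ratio $\nu/N$, and the residues could then cancel. This is the main obstacle. To handle it, I would first show the key inequality that, on the minimal resolution, $\alpha_i<1$ and the signs of $\alpha_i$ along the branches emanating from $E_j$ are controlled. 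Concretely, for a rupture component one branch goes toward the root and has $\alpha\in(0,1)$, while the other branches have $\alpha<0$. Loeser's or Veys' monotonicity of $\nu/N$ along geodesics of the dual graph would give this, and it forces $R_j\neq 0$ with a definite sign; e.g. $2-r_j+\sum 1/\alpha_i$, where the negative $\alpha_i$ and the term $2-r_j<0$ combine against the one positive term larger than 1. Next I would show that all components achieving the same ratio $-s_0$ produce residues of the same sign. Then they cannot cancel, and any double poles (adjacent components with equal ratio) only make the pole stronger.

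For the motivic zeta function the same argument runs with $\frac{\LA-1}{\LA^{\nu+Ns}-1}$. We would specialize to the topological one: a pole of $Z_{\operatorname{top}}$ lifts to a pole of $Z_{\operatorname{mot}}$, and candidate poles of $Z_{\operatorname{mot}}$ not of the stated form cancel by the analogous computation with classes $[E_j^\circ]=\LA+1-r_j$. I expect the sign control of the residues for coinciding ratios to be the delicate step. I would attempt it via the explicit numerical data along Eggers/Enriques diagrams of the branches first.
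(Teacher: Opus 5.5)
The paper does not prove this statement; it quotes it from \cite[Theorem 4.3]{veys3}, so your proposal has to stand on its own. Your ``only if'' half is the standard argument and essentially works. The one gap is that your residue computation for $r_j\le 2$ assumes $E_j$ has a different ratio from its neighbours. The missing case is easy to handle. Along a chain of valence-two components, the relations $\kappa N=N_a+N_b$ and $\kappa\nu=\nu_a+\nu_b$ propagate equality of ratios. At a valence-one end, $\kappa\nu=\nu_a+1$ makes equality impossible. So a double pole coming from such components forces a rupture or strict transform component with the same ratio.

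The ``if'' half has a genuine gap.

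\emph{The sign pattern you propose is impossible.} The inequality $\alpha_i<1$ you aim for is the right one (Loeser). But for a rupture component, Lemma \ref{num lemma}(3) with $g_0=0$ gives $\sum_{i=1}^r\alpha_i=r-2\ge 1$, whereas one $\alpha_i\in(0,1)$ with all others negative gives a sum $<1$. Concretely, for $x^3-y^2$ with $dx\wedge dy$, the rupture component $(6,5)$ has neighbours $(2,2),(3,3),(1,1)$, so $\alpha=\frac13,\frac12,\frac16$, all positive. Also, a dead-branch neighbour always has $\alpha_1=1/\Delta_{1,r}\in(0,1)$; this is the computation in the proof of Proposition \ref{prop1} with all $m_i=0$.

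\emph{What $\alpha_i<1$ actually gives.} It gives that at most one $\alpha_i$ is negative. Writing $\alpha_i=1-\beta_i$, one checks that $2-r+\sum_i 1/\alpha_i$ is positive if all $\alpha_i>0$ and negative if one $\alpha_i$ is negative. So an isolated rupture component never has zero residue, but the sign is not uniform: it is positive exactly when $\nu/N$ at $E_j$ is a strict local minimum among its neighbours. For example, take $f=xy\prod_{i=1}^{3}(x-y-c_iy^2)$ with distinct $c_i$. Its rupture components $(5,2)$ and $(8,3)$ have residues $-\frac{8}{15}$ and $\frac{27}{20}$. Strict transform components have residue $1/(N_j\alpha_i)$, which can also have either sign.

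\emph{The missing global step.} Your claim that ``all components achieving the same ratio produce residues of the same sign'' therefore cannot follow from local sign control. What is needed is a global statement about how the ratios $\nu_j/N_j$ are distributed on the minimal resolution graph: distinct, non-adjacent components with equal ratio cannot have cancelling residues. This is exactly the non-trivial content of the theorem. Your proposal only defers it to an unspecified analysis of Eggers/Enriques diagrams, so the ``if'' direction remains unproved.
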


For some generalizations of these zeta functions for functions on a singular ambient space of arbitrary dimension, see \cite{veys4}.

\subsection{Monodromy conjecture}\label{monconj}
   We first briefly recall the notion of local monodromy \cite{milnor}. 
Let $f$ be a polynomial function $\C^n \rightarrow \C$ and  $p \in f^{-1}\{0\}$. Choose a small enough ball $B \subset \C^n$ with centre $p$ and a small enough punctured disc $D^* \subset \C\setminus \{0\}$ with centre $0$. Then the restriction $f|_{B \cap f^{-1}(D^*)}$ is the \emph{projection} of a \emph{smooth fibration}. `The' \emph{(local) Milnor fibre at} $p$ is the fibre $F_p$ of this fibration. The counterclockwise generator of the fundamental group of $D^*$ induces linear automorphisms $M_i$ on the cohomology vector spaces $H^i(F_p,\C)$. These automorphisms are called the \emph{(algebraic) monodromy actions of} $f$ \emph{at} $p$. A \emph{monodromy eigenvalue} of $f$ at $p$ is an eigenvalue of one of the $M_i$.

It is well known that $H^i(F_p,\C)=0$ for $i \geq n$, and that all monodromy eigenvalues are roots of unity.
There is a classical formula of A'Campo for the alternating product of the characteristic polynomials of the $M_i$ in terms of an embedded resolution of singularities of $f$ \cite{acampo}. We will mention the version that we need in Section \ref{setting}.

We only state the local versions of the conjecture.

\begin{conjecture}[Monodromy conjecture]
	Let $f \in \C[x_1,\hdots,x_n] \setminus \C$ and assume $f(o) = 0$. If $s_0$ is a pole of $\Zeta{f}{s}{o}$ or $\Zetam{f}{s}{o}$, then $e^{2\pi i s_0}$ is a monodromy eigenvalue of $f$ at some point of $f^{-1}\{0\}$ close to $o$.
\end{conjecture}

We note that there is a subtlety concerning the notion of a pole of $\Zetam{f}{s}{o}$, because $\GR$ is not an integral domain \cite{poonen} (even $\LA$ is a zero divisor \cite{borisov}).  We refer to  \cite[Section 4]{bartenveys} for an exact definition.


The motivic zeta function specializes to the topological zeta function. This means that the motivic zeta function is `finer' than the topological zeta function, i.e., every pole of the topological zeta function is also a pole of the motivic zeta function. Hence, the monodromy conjecture in the motivic setting implies it in the topological setting. The other direction is not necessarily true, there exist explicit examples where a pole of the motivic zeta function vanishes for the topological zeta function (such an example is $f=x_1^3+x_2^3+x_3^3+x_4^3+x_5^6$).

\medskip
The conjecture was proven for plane curves by Loeser in 1988 \cite{loeser}. However, he derived this from a stronger statement. Rodrigues gave a more elementary proof in \cite{rodrigues}. In higher dimension, we refer to \cite{veys} for partial results in dimension $3$ and an overview of special families of polynomials, for which the conjecture is proven. In its full generality, the conjecture remains a wide open problem.

\section{Setting}\label{setting}

\subsection{}\label{setting1}
We fix  setting and notation that can be used for both Sections \ref{singular} and \ref{planecurves}.
Let $(S,o)$ be an isolated  (algebraic) surface germ, where $o\in S$ is either smooth or an isolated singularity, and fix a nonconstant regular function germ $f$ on $S$. Let
$\omega$ be a regular $2$-form on $S$.

 Let $h:Y \rightarrow S$ be an embedded resolution of $D:=\Div{f} \cup \Div{\omega}$, where we still assume that
the restriction of $h$ to $Y \setminus  h^{-1}(D) $ is an isomorphism. As before we denote the irreducible components of $h^{-1}(D)$ by $E_j$, $j \in T = T_e \sqcup T_s$, where $T_e$ runs over the exceptional components and $T_s$ over the components of the strict transform of $D$.
We set $E_i^\circ = E_i\setminus \cup_{\ell\in T\setminus\{i\}}E_\ell$ for $i\in T_e$.

The \emph{numerical data} of $E_j$ are $(N_j,\nu_j)$, where $N_j$ and $\nu_j-1$ are the multiplicities of $E_j$ in the divisor defined by $f \circ h$ and $h^*(\omega)$, respectively.
In this more general setting, we still have that  $\nu_j \geq 1$ for all $j\in T$, and $N_j \geq 0$ for all $j\in T$, but now $N_j=0$ for the (strict transform of) components of  $\Div{\omega}$ that are {\em not} a component of $\Div{f}$.

Then the \emph{(local) motivic and topological zeta functions} of $f$ and $\omega$ on $(S,o)$ are
$$\Zetamw{f}{s}{o}{\omega} :=
\sum_{j\in T_e} [E_j^\circ] \frac{\LA-1}{\LA^{\nu_j+N_js}-1} +  \sum_{i \neq j\in T} [E_i \cap E_j]\frac{(\LA-1)^2}{(\LA^{\nu_i +N_is}-1)(\LA^{\nu_j +N_js}-1)} \in \M[[\LA^{-s}]],
$$
and
\begin{equation}\label{ropzetaS}
\Zetaw{f}{s}{o}{\omega}:=
 \sum_{j\in T_e} \frac{\chi(E_j^\circ)}{\nu_j +N_js} + \sum_{i \neq j\in T} \frac{\chi(E_i \cap E_j)}{(\nu_i +N_is)(\nu_j+N_js)} \in \Q(s),
\end{equation}
 respectively.
It is straightforward to verify that these expressions are invariant under blow-ups, see e.g.  \cite{veys2}.


\begin{remark}
	Since the motivic zeta function specializes to the topological zeta function, a counterexample for the topological zeta function is automatically also a counterexample for the motivic zeta function. Hence, we focus on the topological zeta function.
\end{remark}

\subsection{}
Before we explore the adaptations of the conjecture, we study when a candidate pole $\nu_j = -\frac{\nu_j}{N_j}$, $j \in T$, fails to be an actual pole of the topological zeta function.
Note that $s_0$ is a pole of order $2$ of $\Zetaw{f}{s}{o}{\omega}$ if and only if there exist two intersecting components $E_i$ and $E_j$ such that $s_0=-\frac{\nu_i}{N_i}=-\frac{\nu_j}{N_j}$. We know that any candidate pole of order $2$ is an actual pole of order $2$, since $\chi(E_i \cap E_j)>0$ whenever $E_i \cap E_j \neq \emptyset$.

Fix an exceptional component $E_0$, intersecting exactly $r$ times other components $E_1, \hdots, E_r$. Then $\chi(E_0^\circ) = 2-g_0-r$, where $g_0 = g(E_0)$ is the genus of the curve $E_0$. In the formula (\ref{ropzetaS}), 
 the contribution of $E_0$ is
$$\frac{1}{\nu_0+N_0s}\left(2-2g_0-r + \sum_{j=1}^r \frac{1}{\nu_j+N_js}\right).$$\\
Assume now that $\frac{\nu_0}{N_0} \neq \frac{\nu_j}{N_j}$ for $j=1,\hdots,r$; then $s_0 = -\frac{\nu_0}{N_0}$ is a candidate pole of order $1$ of this contribution with residue
\begin{equation} \label{residue}
	R = \frac{1}{N_0}\left(2-2g_0-r + \sum_{j=1}^r \frac{1}{\nu_j-\frac{\nu_0}{N_0}N_j}\right) = \frac{1}{N_0}\left(2-2g_0-r + \sum_{j=1}^r \frac{1}{\alpha_j}\right),
\end{equation}
where $\alpha_j := \nu_j-\frac{\nu_0}{N_0}N_j$ for $j=1,\hdots,r$. We say that $E_0$ {\em does not contribute to the poles of $\Zetaw{f}{s}{o}{\omega}$} if this residue is zero.

\begin{proposition}\label{contribution}
(1)	Using the same notation as above, $E_0$ does not contribute to the poles of $\Zetaw{f}{s}{o}{\omega}$ if $g_0 = 0$ and $r=1$ or $r=2$.  

(2) More generally, $E_0$ does not contribute to the poles of $\Zetaw{f}{s}{o}{\omega}$ if $g_0 = 0$ and $\alpha_j\neq 1$ for at most two $j\in \{1,\hdots,r\}$.
\end{proposition}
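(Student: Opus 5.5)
The plan is to show directly that the residue $R$ in (\ref{residue}) vanishes. The two intersection-theoretic relations on the smooth surface $Y$ that pin down $\sum_j \alpha_j$ are enough for this. Throughout, $E_0$ meets $E_1,\dots,E_r$ transversally, each $E_j$ meeting $E_0$ in one point; if some component meets $E_0$ in several points, it is listed once per intersection point. Write $E_0^2=-k_0$.

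\emph{Step 1: relation for the $N_j$.} The divisor $\Div{f\circ h}=\sum_{j\in T}N_jE_j$ is principal, so its intersection number with the compact curve $E_0$ is zero. This gives
$$-k_0N_0+\sum_{j=1}^r N_j=0, \quad\text{i.e.}\quad \sum_{j=1}^r N_j = k_0N_0 .$$

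\emph{Step 2: relation for the $\nu_j$.} Since $h$ is an isomorphism off $h^{-1}(D)$ and $h^*\omega$ is a nonzero rational $2$-form on $Y$, the divisor $\Div{h^*\omega}=\sum_{j\in T}(\nu_j-1)E_j$ is a canonical divisor $K_Y$. This holds even when $o$ is singular, and it includes the strict transform of $\Div{\omega}$. Adjunction $(K_Y+E_0)\cdot E_0=2g_0-2$ then gives
$$-k_0(\nu_0-1)+\sum_{j=1}^r(\nu_j-1)-k_0=2g_0-2, \quad\text{i.e.}\quad \sum_{j=1}^r\nu_j = k_0\nu_0+r+2g_0-2 .$$

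\emph{Step 3: combine.} From Steps 1 and 2,
$$\sum_{j=1}^r\alpha_j=\sum_j\nu_j-\frac{\nu_0}{N_0}\sum_jN_j = r+2g_0-2 .$$
For $g_0=0$ this reads $\sum_{j=1}^r(\alpha_j-1)=-2$. The numerator of $R$ is $2-r+\sum_j 1/\alpha_j$; all $\alpha_j\neq 0$ by the standing assumption $\nu_0/N_0\neq\nu_j/N_j$.

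Now suppose $\alpha_j=1$ for all $j\geq 3$ after renumbering. There are three cases.
\begin{itemize}
\item If $\alpha_1,\alpha_2$ may both differ from $1$, the relation gives $\alpha_1+\alpha_2=0$. Then $\sum_j 1/\alpha_j=(r-2)+1/\alpha_1+1/\alpha_2=r-2$, so $R=0$.
\item If only $\alpha_1\neq1$, the relation forces $\alpha_1=-1$. Then $\sum_j1/\alpha_j=(r-1)-1=r-2$, so again $R=0$.
\item If all $\alpha_j=1$, the relation would give $0=-2$, so this case cannot occur.
\end{itemize}
This proves (2). Part (1) is the special case $r\le 2$, where the condition on the $\alpha_j$ is vacuous.

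\emph{Main obstacle.} The only delicate point is Step 2 in the singular setting, namely justifying that $\Div{h^*\omega}$ is a canonical divisor on $Y$ whose support is exactly $\cup E_j$. Once that is accepted, the rest is routine bookkeeping, including the convention of counting multiple intersection points separately.
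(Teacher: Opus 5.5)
Your proof is correct and follows the paper's route. The paper obtains the proposition immediately from the relation $\sum_{j=1}^r(\alpha_j-1)=2g_0-2$ of Lemma \ref{num lemma}, whereas you rederive that relation from $\Div{f\circ h}\cdot E_0=0$ and adjunction with $K_Y=\Div{h^*\omega}$. Your case analysis, forcing $\alpha_1+\alpha_2=0$ or $\alpha_1=-1$, is exactly the step the paper leaves implicit, and your derivation also makes explicit that the lemma holds for arbitrary $\omega$ and singular $S$, since it uses only smoothness of $Y$.
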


\begin{remark}
	The same result holds for $\Zetamw{f}{s}{o}{\omega}$.
\end{remark}

This follows immediately from the lemma below.


\begin{lemma}[{\cite[Lemma 4.1]{veys}}]\label{num lemma}
	Let the exceptional component $E_0$ intersect exactly $r$ times other components $E_1,\hdots,E_r$. Denote $\kappa = -E_0^2$, where $E_0^2=E_0\cdot E_0$ is the self-intersection number of $E_0$ on $Y$, and $g_0 = g(E_0)$ for the genus of $E_0$. Then
	\begin{enumerate}[label=(\arabic*)]
		\item $\kappa N_0 = \sum_{j=1}^r N_j$;
		\item $\kappa \nu_0 = \sum_{j=1}^r (\nu_j-1)+2-2g_0$; and
		\item $\sum_{j=1}^r (\alpha_j-1)=2g_0-2$, where $\alpha_j = \nu_j-\frac{\nu_0}{N_0}N_j$ for $j=1,\hdots,r$.
	\end{enumerate}
\end{lemma}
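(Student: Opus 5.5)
We prove the three identities of Lemma \ref{num lemma} by intersecting suitable divisors on $Y$ with $E_0$, using the adjunction formula for (2), and then obtain (3) formally from (1) and (2). Throughout we use that $E_0$ is a compact exceptional curve, so $E_0\cdot \Div{\phi}=0$ for the divisor of any regular function $\phi$ defined near $h^{-1}(o)$. We also use that $E_0$ meets exactly $E_1,\dots,E_r$, each transversally in a single point, so $E_0\cdot E_j=1$ for $j=1,\dots,r$. If some $E_j$ met $E_0$ in several points, the same $j$ would simply be listed several times, and the argument is unchanged.

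For (1), write $\Div{f\circ h}=\sum_{j\in T}N_jE_j$, where the sum includes the strict transforms, some with $N_j=0$. Intersecting with $E_0$ gives
$$0=N_0E_0^2+\sum_{j=1}^r N_j,$$
since components not meeting $E_0$ contribute nothing. This yields $\kappa N_0=\sum_j N_j$.

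For (2), put $K:=\Div{h^*\omega}=\sum_{j\in T}(\nu_j-1)E_j$. Because $\omega$ is a regular $2$-form on $S$ and $h$ is an isomorphism off $h^{-1}(D)$, $K$ is a canonical divisor on $Y$. The adjunction formula $2g_0-2=K\cdot E_0+E_0^2$ then gives
$$2g_0-2=(\nu_0-1)E_0^2+\sum_{j=1}^r(\nu_j-1)+E_0^2=-\kappa\nu_0+\sum_{j=1}^r(\nu_j-1).$$
This is exactly (2). The only point requiring care is that $K$ really represents $K_Y$ and that adjunction applies on the smooth surface $Y$ to the smooth compact curve $E_0$. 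Both hold in the setting of \ref{setting1}, even when $o$ is singular, since $Y$ is smooth and $\omega$ is a global section of $\Omega^2$ near the germ.

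For (3), sum the definition of the $\alpha_j$:
$$\sum_{j=1}^r \alpha_j=\sum_{j=1}^r\nu_j-\frac{\nu_0}{N_0}\sum_{j=1}^r N_j=\sum_{j=1}^r\nu_j-\kappa\nu_0.$$
Here we used (1), which needs $N_0\neq 0$; this holds because $E_0$ is exceptional and $f(o)=0$, so $N_0\geq 1$. By (2), $\sum_j\nu_j-\kappa\nu_0=r+2g_0-2$, hence $\sum_j(\alpha_j-1)=2g_0-2$.

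The main obstacle is step (2), specifically justifying adjunction with the divisor of $\omega$ as canonical divisor in the possibly singular setting; everything else is bookkeeping.
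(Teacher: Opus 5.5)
Your proof is correct and is the standard argument behind this lemma; the paper does not reprove it but cites \cite[Lemma 4.1]{veys}. As there, you get (1) from $E_0\cdot\Div{f\circ h}=0$, (2) from adjunction with $K_Y$ represented by $\Div{h^*\omega}$ (using that $h^*\omega\not\equiv 0$, which is implicit in the setting), and (3) by combining the two with $N_0\geq 1$, and this transfers verbatim to the singular setting of Subsection \ref{setting1}.
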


\subsection{}
For the notion of monodromy in this setting, namely of the function $f$ on the possibly singular surface $S$, 
we embed $(S,p)$ in $(\C^m,p)$ for some sufficiently large $m$. Then $(S,p)$ has defining polynomials $f_1,\hdots,f_{r} \in \C[x_1,\hdots,x_m]$ and we view $f$ in $\C[x_1,\hdots,x_m]/(f_1,\hdots,f_{r})$. So, we can extend $f$ to a map $\C^m \rightarrow \C$.  
 Choose a small enough ball $B \subset \C^m$ with centre $p$ and a small enough punctured disc $D^* \subset \C\setminus \{0\}$ with centre $0$, and then the \emph{(local) Milnor fibre at} $p$ is the fibre $F_p$ of the smooth fibration $f|_{B \cap S \cap f^{-1}(D^*)}$.
 The counterclockwise generator of the fundamental group of $D^*$ again induces linear automorphisms $M_i$ on the cohomology vector spaces $H^i(F_p,\C)$.
Now $H^i(F_p,\C)=0$ for $i \geq \dim S = 2$; a \emph{monodromy eigenvalue} of $f$ at $p$ is an eigenvalue of  $M_0$ or $M_1$.


\begin{theorem}[A'Campo's formula \cite{acampo}.]\label{A campo}
Using notation as above, let $P_{i,p}(t)$ denote the characteristic polynomial of  $M_i$ on $H^i(F_p,\C)$, for $i=0,1$.
Choose an embedded resolution $h:Y \rightarrow S$ of $f^{-1}\{0\}$, for which we use similar notations $E_j$ and $N_j$, $j\in T$, as above. Then
	$$\frac{P_{1,p}(t)}{P_{0,p}(t)} = \prod_{j \in T_e} (t^{N_j}-1)^{-\chi(E_j^\circ \cap h^{-1}(p))}.$$
\end{theorem}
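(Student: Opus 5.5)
The plan is to compute the \emph{zeta function of monodromy} $\zeta_p(t)=P_{1,p}(t)/P_{0,p}(t)$ through Lefschetz numbers. Since $H^i(F_p,\C)=0$ for $i\ge 2$, this ratio is determined by the Lefschetz numbers $\Lambda(M^k)=\operatorname{tr}(M_0^k)-\operatorname{tr}(M_1^k)$ of the iterates of the geometric monodromy $M$, for all $k\ge 1$. Moreover, $\Lambda$ is additive under decompositions of $F_p$ into $M$-invariant pieces (Mayer--Vietoris, or compactly supported Euler characteristics). So it suffices to find a geometric monodromy that respects a decomposition of $F_p$ into pieces on which the iterates can be computed explicitly.

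Throughout I take $p$ to be the point over which the exceptional locus sits, so that $h^{-1}(p)$ is the union of the $E_j$, $j\in T_e$, that lie over $p$. Then $h$ identifies $F_p$ with the fibre $\{f\circ h=\delta\}$ inside a small tubular neighbourhood $\mathcal{U}$ of $h^{-1}(p)$ in $Y$. I will cover $\mathcal{U}$ by two kinds of neighbourhoods:
\begin{itemize}
\item neighbourhoods of the double points $E_i\cap E_j$, in which $f\circ h=u^{N_i}v^{N_j}$ for local coordinates $u,v$;
\item tubular neighbourhoods of the open pieces $E_j^\circ\cap h^{-1}(p)$, in which $f\circ h=u^{N_j}\cdot(\text{unit})$, where the unit can be absorbed after shrinking.
\end{itemize}
This covers everything, including the points where a strict transform component meets the exceptional divisor; those are double points of the first kind.

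Next I analyse the fibre locally.
\begin{itemize}
\item Over $E_j^\circ\cap h^{-1}(p)$, the fibre $u^{N_j}=\delta$ is an unramified $N_j$-sheeted cyclic cover of $E_j^\circ\cap h^{-1}(p)$. The monodromy $\delta\mapsto e^{2\pi i\theta}\delta$ lifts to the deck transformation $u\mapsto e^{2\pi i/N_j}u$, which permutes the sheets cyclically.
\item Near a double point, the fibre $u^{N_i}v^{N_j}=\delta$ is a disjoint union of $\gcd(N_i,N_j)$ annuli, which is homotopy equivalent to circles. Its Euler characteristic is $0$, and the same holds for the fixed-point sets of all iterates, so these pieces contribute nothing.
\end{itemize}
The main technical obstacle is gluing these local monodromies into one global geometric monodromy, compatibly on the overlaps. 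I would handle this in A'Campo's way: choose a vector field lifting $\partial/\partial\theta$ that is adapted to the local product structures, for instance via a partition of unity. On the overlaps, which are annuli times circles, the flows must agree up to isotopy; this keeps each piece invariant.

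Finally I compute the Lefschetz numbers. On the piece over $E_j^\circ$, the iterate $M^k$ has no fixed points unless $N_j\mid k$, and in that case it is the identity. So its Lefschetz number is $N_j\,\chi(E_j^\circ\cap h^{-1}(p))$ if $N_j\mid k$ and $0$ otherwise. This is exactly the Lefschetz sequence of a cyclic permutation of $N_j$ points, whose zeta function is $(t^{N_j}-1)$, taken with multiplicity $\chi(E_j^\circ\cap h^{-1}(p))$. With the sign convention $P_1/P_0$, the piece over $E_j^\circ$ therefore contributes the factor $(t^{N_j}-1)^{-\chi(E_j^\circ\cap h^{-1}(p))}$. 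Multiplying over $j\in T_e$ gives the stated formula. Strict transform components only meet $h^{-1}(p)$ in double points, so they contribute nothing, which is consistent with the product running over $T_e$ only.
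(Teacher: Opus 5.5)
The paper gives no proof of this statement; it quotes it from A'Campo. Your outline (Lefschetz numbers of a geometric monodromy adapted to the resolution, free $\mu_{N_j}$-actions on the cyclic covers over the $E_j^\circ$, zero contribution from the annuli at double points) is essentially A'Campo's own argument, and you correctly restrict to the case where $h^{-1}(p)$ is a union of exceptional curves, since at points $p\neq o$ of $f^{-1}\{0\}$ the strict transform contributes and the product over $T_e$ alone would be wrong.

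The step you defer is where the real content lies. The lifted vector field must generate exactly the rotation $u\mapsto e^{2\pi i\theta/N_j}u$ over the cores of the $E_j^\circ$ and must preserve $|u|$ and $|v|$ near double points; local flows that agree only up to isotopy would not keep the pieces invariant. On the annuli, it is cleaner to note that $M^k$ acts trivially on $H^0$ and $H^1$ of each annulus it maps to itself and permutes the others, so $\Lambda(M^k)=0$ there without appealing to fixed-point sets.
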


The right hand side is thus an alternating product of cyclotomic polynomials; we will denote in the sequel the $k$th cyclotomic polynomial by $\Phi_k$.

\begin{remark}\label{strict transform poles}
(1) When $S$ is smooth at $p$, it is well known that $P_{0,p}(t)=t^d -1$, where $d$ is the greatest common divisor of the $N_j, j\in T_s$.

(2) In particular, in our setting, as fixed in Subsection \ref{setting1}, we can take $j \in T_s$ and a point $p \in S$, {\em different from $o$}, on the component of $f^{-1}\{0\}$ whose strict transform by $h$ is $E_j$. By A'Campo's formula
and (1),  every $N_j$th root of unity is a monodromy eigenvalue of $f$ at $p$. In particular, if $s_0 = -\frac{\nu_j}{N_j}$ is a pole of $\Zetaw{f}{s}{o}{\omega}$, then $e^{2\pi i s_0}$ is always a monodromy eigenvalue of $f$. Hence, we are not really interested in the poles arising from components of the strict transform and we will not discuss them in the examples.
We call those the {\em trivial candidate poles}.

(3) When $S$ is singular at $o$, we still have that $P_{0,o}(t)$ has the form $t^d -1$, but then in general we only know  that $d$ {\em divides} the greatest common divisor of the $N_j, j\in T_s$, see \cite[3.4.8]{nemethi}.

(4) In our all examples below, we have however that $\gcd_{j\in T_s} N_j=1$, and then $P_{0,o}(t)=t -1$ and $P_{1,o}(t)/P_{0,o}(t)$ really detects {\em all} eigenvalues different from $1$.
\end{remark}


\section{Curves on singular surfaces}\label{singular}

Since every variety is locally affine, let $(S,o)$ be embedded in $(\A^m,o)$, where $m$ is sufficiently large. Consider on $\A^m$ the $2$-form  $dl_1 \wedge dl_2$, where $l_1,l_2$ are generic linear functions on $\A^m$.  We take as $2$-form $\omega$ on $S$ the restriction of $dl_1 \wedge dl_2$ to $S$.

\begin{remark}
This construction indeed generalizes the form $dx\wedge dy$ on $S=\A^2$: any generic such form $dl_1 \wedge dl_2$ on $\A^2$ itself is of the form $\lambda dx \wedge dy$, for some $\lambda \in \C^*$.
\end{remark}

\subsection{Curves on normal surfaces}
In this subsection, we consider a {\em normal} surface germ $(S,o)$.
 Simple examples of normal surface singularities are the ADE singularities, see e.g. \cite{wall}. These are surfaces in $\A^3$, so we use this embedding to obtain an embedded resolution and use the coordinates $x,y$ and $z$ on $\A^3$. For all the examples in this subsection, we can write the $2$-form $\omega$ as
$$\omega = \alpha dx \wedge dy + \beta dx \wedge dz + \gamma dy \wedge dz,$$
where $\alpha$, $\beta$ and $\gamma$ are generic complex coefficients.


\begin{example}
Take as $(S,o)$ the $A_1$ singularity, defined by the polynomial $xy-z^2$, and as $f$ the function $x^4-y$ on $S$. We start with a blow-up of $\A^3$ in the origin to compute the minimal resolution of the $A_1$ singularity.
\medskip
	
	\underline{Blow-up 1.}
	All the relevant information is in the chart given by $(x,y,z) \mapsto (x,xy,xz)$. The total transform of the surface is given by $x^2(y-z^2)$. Hence, its strict transform is $V(y-z^2)$ and the exceptional component is $V(x)$. Here and in the sequel, $V(I)$ is the zero-locus of the polynomial ideal $I$. The surface $V(y-z^2)$ is non-singular and is isomorphic to $\A_{x,z}^2$ by substituting $y$ by $z^2$. Hence the pullback of $f$ is $x(x^3-y) = x(x^3-z^2)$. We compute the pullback of $\omega$ by the following steps:
	\begin{align*}
			dx \wedge dy & \leadsto x dx \wedge dy\\
			& \leadsto x dx \wedge d(z^2) = 2x zdx \wedge dz,\\
			dx \wedge dz & \leadsto x dx \wedge dz,\\
			dy \wedge dz &  \leadsto (ydx+xdy) \wedge (zdx+xdz) = xy dx \wedge dz -xz dx \wedge dy +x^2 dy \wedge dz\\
			& \leadsto -xz^2 dx \wedge dz.
	\end{align*}
	We conclude that the pullback of $\omega$ is
	$$x(2\alpha z +\beta -\gamma z^2)dx \wedge dz.$$
	Note that the exceptional curve $E_1$ and $V(2\alpha z +\beta -\gamma z^2)$ intersect twice, see Figure \ref{figure22}. In this chart, $V(x^3-z^2)$ still has a singular point, so further blow-ups are needed. This computation is classical and well known. The numerical data can be calculated by the well-known algorithm in Proposition \ref{algoritme} below, see e.g. {\cite[Section 8.3]{wall2}}.
	
	\begin{proposition}\label{algoritme}
		Let $f \in \C[x,y]\setminus \C$, let $\omega =gdx \wedge dy$ with $g \in \C[x,y] \setminus \{0\}$, and let an embedded resolution $h$ of $V(fg)$ be given by the composition of some blow-ups $\pi_1,\hdots,\pi_m$. Assume we already performed the first $k < m$ blow-ups, creating the exceptional components $E_1,\hdots, E_k$ with their numerical data $(N_1,\nu_1),\hdots, (N_k,\nu_k)$. Denote the multiplicity of the strict transform of $V(f)$ and $V(g)$ in the centre of the next blow-up $\pi_{k+1}$ by $\mu_0$ and $\lambda_0$, respectively. There are three distinct and exhaustive cases for the next blow-up $\pi_{k+1}$.
		
		\begin{enumerate}[label=(\arabic*)]
			\item Blow up in a point that does not lie on any exceptional component. (This means that k=0.) Then $N_{k+1} = \mu_0$ and $\nu_{k+1} = \lambda_0+2$.
			\item Blow up in a point that lies on exactly one exceptional component $E_i$. Then $N_{k+1} = N_k+\mu_0$ and $\nu_{k+1} = \nu_i+\lambda_0+1$.
			\item Blow up in a point that lies on exactly two exceptional components $E_i$ and $E_j$. Then $N_{k+1} = N_i+N_j+\mu_0$ and $\nu_{k+1} = \nu_i+\nu_j+\lambda_0$.
		\end{enumerate}
	\end{proposition}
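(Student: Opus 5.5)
We verify the formulas by a local computation in one chart of the blow-up $\pi_{k+1}$, working at the centre $p$ of that blow-up. After the first $k$ point blow-ups, the exceptional locus $E_1\cup\dots\cup E_k$ is a simple normal crossings divisor. This holds because successive blow-ups of points on a smooth surface create smooth rational exceptional curves meeting transversally. Hence we can choose local analytic coordinates $(u,v)$ centred at $p$ such that each exceptional component through $p$ is a coordinate axis. In case (3) we take $E_i=\{u=0\}$ and $E_j=\{v=0\}$. In case (2) we take $E_i=\{u=0\}$, and no exceptional curve is contained in $\{v=0\}$.

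Near $p$, let $\sigma=\pi_1\circ\dots\circ\pi_k$ denote the composition so far. We then write
$$f\circ\sigma = u^{N_i}v^{N_j}\,F(u,v), \qquad \sigma^*(g\,dx\wedge dy)=u^{\nu_i-1}v^{\nu_j-1}\,G(u,v)\,du\wedge dv,$$
where $F$ and $G$ are local equations of the strict transforms of $V(f)$ and $V(g)$ at $p$. By definition these have multiplicities $\mu_0$ and $\lambda_0$ at $p$. To treat all cases uniformly we use the conventions $N_j=0,\ \nu_j=1$ in case (2), and $N_i=N_j=0,\ \nu_i=\nu_j=1$ in case (1). In case (1) we have $k=0$, so $\sigma$ is the identity and $(u,v)=(x,y)$. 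Common components of $f$ and $g$ cause no problem: each function is tracked separately, and only multiplicities at $p$ enter.

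Next we pass to the chart $u=u'$, $v=u'v'$ of $\pi_{k+1}$, in which $E_{k+1}=\{u'=0\}$. There $F(u',u'v')=u'^{\mu_0}\tilde F$ and $G(u',u'v')=u'^{\lambda_0}\tilde G$, where $\tilde F,\tilde G$ are not divisible by $u'$. We also have $du\wedge dv = u'\,du'\wedge dv'$. Reading off the exponents of $u'$ gives
$$N_{k+1}=N_i+N_j+\mu_0,\qquad \nu_{k+1}-1=(\nu_i-1)+(\nu_j-1)+\lambda_0+1 .$$
This yields $\nu_{k+1}=\nu_i+\nu_j+\lambda_0$, which is case (3). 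Substituting the conventions then gives case (2), namely $N_{k+1}=N_i+\mu_0$ and $\nu_{k+1}=\nu_i+\lambda_0+1$. It also gives case (1), namely $N_{k+1}=\mu_0$ and $\nu_{k+1}=\lambda_0+2$. Note that in case (2) the formula should read $N_i$ rather than $N_k$, since $E_i$ need not be the last created curve.

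The only delicate point is the choice of coordinates in which the exceptional curves through $p$ are coordinate axes. This requires the simple normal crossings property of the intermediate exceptional divisor, which we justify by induction on $k$. One also has to observe that the multiplicity of a germ is preserved by an analytic change of coordinates. Once these are in place, everything else is the routine exponent count above.
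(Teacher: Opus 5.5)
Your proof is correct. The paper gives no proof of its own; it only quotes this as a well-known algorithm with a reference to Wall's book (Section 8.3), and your one-chart computation is the standard argument behind it. You are also right that case (2) should read $N_{k+1}=N_i+\mu_0$ rather than $N_k+\mu_0$, and that $\mu_0,\lambda_0$ must be read as multiplicities of the strict transforms of the divisors $\Div{f}$ and $\Div{g}$, counted with multiplicity, which your local equations $F$ and $G$ handle automatically.
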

	
 In the present example, we have thus that $g = 2\alpha z +\beta -\gamma z^2$.
	Let $E_1,E_2,E_3$ and $E_4$ be the consecutive exceptional components and $E_0$ the strict transform of $V(f)$. In Figure \ref{figure22}, the process of the different blow-ups is depicted, showing in particular how the different components intersect. The components of the strict transform of $\Div{\omega}$ are indicated by a dashed blue line.  Also, the numerical data $(N_j,\nu_j)$ for every component $E_j$ are indicated on the final graph.
	\medskip
	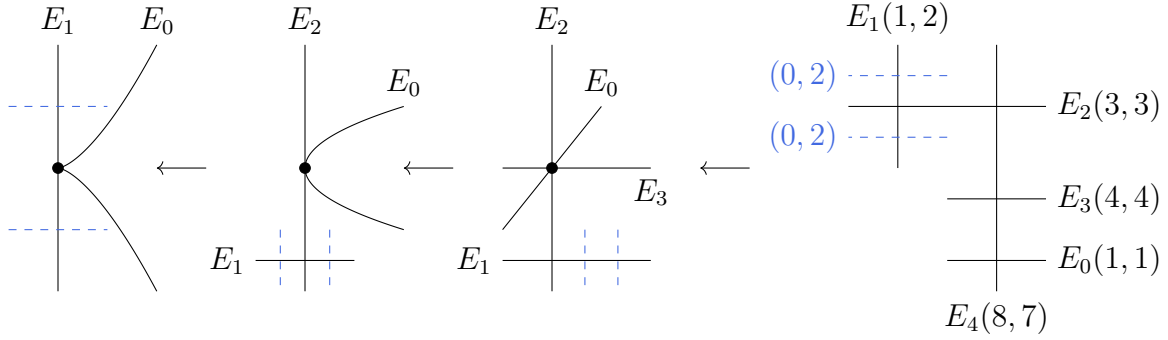
\begin{figure}[h]
		\centering
		\begin{tikzpicture}
			\begin{axis}[
				width=18cm,
				height=6.5cm,
				xmin=-1, xmax=11.5,
				ymin=-1.5, ymax=1.5,
				samples=100,
				hide axis,
				]
				\addplot [domain=-1:1,black] (x^2,x^3) node[above] {$E_0$};
				\addplot [only marks] table {
					0 0
				};
				\addplot [black] table {
					0 -1
					0 1
				} node[above] {$E_1$};
				\addplot [dashed,hot] table {
					-0.5 -0.5
					0.5 -0.5
				};
				\addplot [dashed,hot] table {
					-0.5 0.5
					0.5 0.5
				};
				\addplot [<-] table {
					1 0
					1.5 0
				};

				\addplot [only marks] table {
					2.5 0
				};
				\addplot [domain=-0.5:0.5,black] (4*x^2+2.5,x) node[pos=1,above] {$E_0$};
				\addplot [black] table {
					2.5 -1
					2.5 1
				} node[above] {$E_2$};
				\addplot [black] table {
					3 -0.75
					2 -0.75
				} node[left] {$E_1$};
				\addplot [dashed,hot] table {
					2.75 -0.5
					2.75 -1
				};
				\addplot [dashed,hot] table {
					2.25 -0.5
					2.25 -1
				};
				\addplot [<-] table {
					3.5 0
					4 0
				};
				
				\addplot [only marks] table {
					5 0
				};
				\addplot [domain=-0.5:0.5,black] (x+5,x) node[above] {$E_0$};
				\addplot [black] table {
					5 -1
					5 1
				} node[above] {$E_2$};
				\addplot [black] table {
					4.5 0
					6 0
				} node[below] {$E_3$};
				\addplot [black] table {
					6 -0.75
					4.5 -0.75
				} node[left] {$E_1$};
				\addplot [dashed,hot] table {
					5.333 -0.5
					5.333 -1
				};
				\addplot [dashed,hot] table {
					5.667 -0.5
					5.667 -1
				};
				
				\addplot [<-] table {
					6.5 0
					7 0
				};
				\addplot [black] table {
					9 -0.25
					10 -0.25
				} node[right] {$E_3 (4,4)$};
				\addplot [black] table {
					8 0.5
					10 0.5
				} node[right] {$E_2 (3,3)$};
				\addplot [black] table {
					8.5 0
					8.5 1
				} node[above] {$E_1 (1,2)$};
				\addplot [dashed,hot] table {
					9 0.75
					8 0.75
				} node[left] {$(0,2)$};
				\addplot [dashed,hot] table {
					9 0.25
					8 0.25
				} node[left] {$(0,2)$};
				\addplot [black] table {
					9 -0.75
					10 -0.75
				} node[right] {$E_0 (1,1)$};
				\addplot [black] table {
					9.5 1
					9.5 -1
				} node[below] {$E_4 (8,7)$};
			\end{axis}
		\end{tikzpicture}
		\caption{Process of an embedded resolution of $x^4-y$}
		\label{figure22}
	\end{figure}
	
	There are no poles of order $2$ and, according to Proposition \ref{contribution}, two non-trivial candidate poles, namely $s_0 = -\frac{\nu_1}{N_1} = -2$ and $s_0 = \frac{\nu_4}{N_4} = -\frac{7}{8}$. It is clear that $e^{2\pi i s_0}=1$ is the trivial monodromy eigenvalue for $s_0 = -2$; so it is not important whether $s_0 = -2$ is an actual pole. We have that $s_0=-\frac{7}{8}$ is a pole, since its residue equals
	$$\frac{1}{N_4}\left(\frac{1}{\alpha_0}+\frac{1}{\alpha_2}+\frac{1}{\alpha_3}-1\right) = \frac{35}{24},$$
	where
	$$\alpha_0 = \nu_0-\frac{7}{8} \cdot N_0= \frac{1}{8}, \quad \alpha_2 =\nu_2-\frac{7}{8}\cdot N_2=\frac{3}{8}, \quad \alpha_3=\nu_3-\frac{7}{8}\cdot N_3=\frac{1}{2}.$$
	We compute the monodromy eigenvalues of $f$ using A'Campo's formula (Theorem \ref{A campo}):
	\begin{align*}
		\prod_{j =1}^4 (t^{N_j}-1)^{-\chi(E_j^\circ)} & = \frac{t^{8}-1}{(t-1)(t^4-1)} = \frac{\Phi_1\Phi_2\Phi_4\Phi_8}{(\Phi_1) \cdot (\Phi_1\Phi_2\Phi_4)} = \frac{\Phi_8}{\Phi_1}.
	\end{align*}
	From this, we conclude that $e^{2\pi i s_0}$ is a monodromy eigenvalue of $f$ for every pole $s_0$. Now, this could be a coincidence. The candidate pole arising from $E_1$ induces the trivial mondromy eigenvalue. Let us consider an example, where this is not the case.
\end{example}

\begin{example}
Take again as $(S,o)$ the $A_1$ singularity, defined by the polynomial $xy-z^2$, and now as $f$ the function $y^{10}-z^{13}$. We repeat the process from the previous example to obtain an embedded resolution.
\medskip

\underline{Blow-up 1.}
All the relevant information is in the chart given by $(x,y,z) \mapsto (x,xy,xz)$. Substituting in $f$ gives $x^{10}(y^{10}-x^3z^{13}) = x^{10}(z^{20}-x^3z^{13}) = x^{10}z^{13}(z^{7}-x^3)$. There are two components of the strict transform: $E_0=V(z^{7}-x^3)$ and $E_0' = V(z)$. The pullback of $\omega$ is
$$x(2\alpha z +\beta -\gamma z^2)dx \wedge dz.$$
Then the numerical data of $E_0$, $E_0'$ and $E_1$ are $(1,1)$, $(13,1)$ and $(10,2)$, respectively. Note that $E_1$ and $V(2\alpha z +\beta -\gamma z^2)$ intersect twice, see Figure \ref{figure24}.  In this chart, $V(z^7-x^3)$ still has a singular point, so further blow-ups are needed. The intersection diagram for an embedded resolution is shown in Figure \ref{figure24}.
\medskip
	
	\begin{figure}[h!]
		\centering
		\begin{tikzpicture}
			\begin{axis}[domain=-1:1,
				width=18cm,
				height=6.5cm,
				xmin=-1, xmax=11.5,
				ymin=-1.5, ymax=1.5,
				samples=100,
				hide axis,
				]
				\addplot [only marks] table {
					0 0
				};
				\addplot [black] table {
					0 -1
					0 1
				} node[above] {$E_1$};
				\addplot [domain=-1:1,black] (x^2,x^3) node[above] {$E_0$};
				\addplot [black] table {
					0.5 0
					-0.5 0
				} node[left] {$E_0'$};
				\addplot [dashed,hot] table {
					-0.5 -0.5
					0.5 -0.5
				};
				\addplot [dashed,hot] table {
					-0.5 0.5
					0.5 0.5
				};
				\addplot [<-] table {
					1.5 0
					2 0
				};
				
				\addplot [black] table {
					4.5 -0.75
					7.5 -0.75
				} node[right] {$E_6 (130,17)$};
				\addplot [black] table {
					4.75 1
					4.75 -1
				} node[below] {$E_5 (90,12)$};
				\addplot [black] table {
					7.25 1
					7.25 -1
				} node[below] {$E_3 (39,5)$};
				\addplot [black] table {
					6 -1
					6 1
				} node[above] {$E_0 (1,1)$};
				\addplot [black] table {
					5 0.75
					3.75 0.75
				} node[left] {$E_4 (50,7)$};
				\addplot [black] table {
					7 0.75
					8.25 0.75
				} node[right] {$E_2 (26,3)$};
				\addplot [black] table {
					8 0.5
					8 1
				} node[above] {$E_0' (13,1)$};
				\addplot [black] table {
					4 -0.25
					4 1
				} node[above] {$E_1 (10,2)$};
				\addplot [dashed,hot] table {
					4.25 0
					3.75 0
				} node[left,hot] {$(0,2)$};
				\addplot [dashed,hot] table {
					4.25 0.375
					3.75 0.375
				} node[left,hot] {$(0,2)$};
			\end{axis}
		\end{tikzpicture}
		\caption{Intersection diagram for $y^{10}-z^{13}$}
		\label{figure24}
	\end{figure}
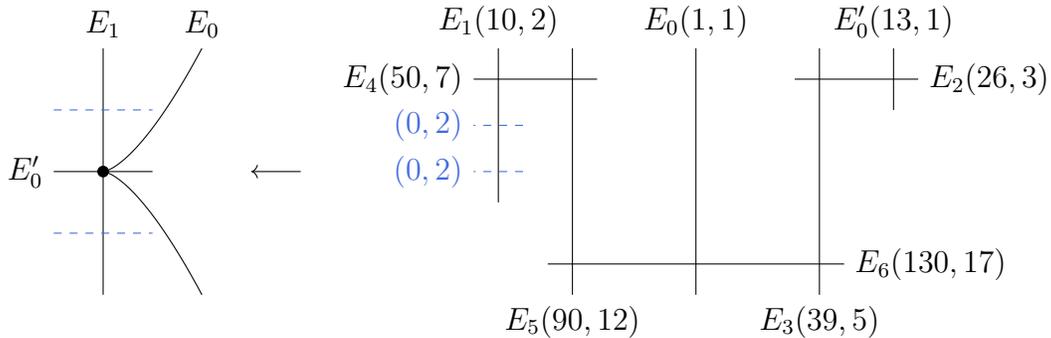
	
The residue of the candidate pole $s_0 = -\frac{\nu_1}{N_1} = -\frac{1}{5}$ induced by $E_1$ is easily calculated as $ -\frac{1}{30}$, hence it is a pole. We compute the monodromy eigenvalues of $f$ using A'Campo's formula (Theorem \ref{A campo}):
	\begin{align*}
		\prod_{j =1}^6 (t^{N_j}-1)^{-\chi(E_j^\circ)} & = \frac{t^{130}-1}{t^{10}-1} = \Phi_{13}\Phi_{26}\Phi_{65}\Phi_{130}.
	\end{align*}
We also have the trivial monodromy eigenvalue $1$. From this, we conclude that $e^{2\pi i s_0}$ is not a monodromy eigenvalue of $f$ for the pole $s_0=-\frac{1}{5}$. This example provides a counterexample for a possible generalization of the monodromy conjecture in this setting.
\end{example}

\begin{remark}
	Let us mention a few other examples of normal surfaces. On the $A_2$ singularity in $\A^3$, given by the polynomial $xy-z^3$, we consider $f_1 = y^3-x^4$ and $f_2 = z^7-x^5$ as  regular functions. For $f_1$ the generalization of the monodromy conjecture is true, whereas for $f_2$ this is not the case.

As a last example, we consider a cyclic quotient singularity, that is not a hypersurface or a complete intersection (as in the previous examples). The surface in $\A^4$ (with coordinates $x_1,x_2,x_3,x_4$) is defined by the polynomials $x_2x_3-x_1x_4$, $x_2^2-x_1x_3$ and $x_3^2-x_2x_4$. Consider $$\omega = \alpha_1 dx_1\wedge dx_2+\alpha_2 dx_1 \wedge dx_3+\alpha_3 dx_1 \wedge dx_4 + \alpha_4 dx_2 \wedge dx_3+\alpha_5dx_2 \wedge dx_4+\alpha_6dx_3\wedge dx_4,$$
	where the $\alpha_i$ are generic complex coefficients. For $f_1=x_1^5-x_2^3$ the generalization of the monodromy conjecture is true, but for $f_2=x_3^{11}-x_4^8$ this is not the case. We leave the computations to the reader.
\end{remark}

These examples confirm that there is no possible generalization in this setting of the monodromy conjecture for curves on normal surfaces.

\subsection{Curves on a non-normal surface}
It is also interesting to consider a non-normal surface in our generalized setting. (In the context of the other generalization, mentioned in the introduction, it was necessary that the surface was normal to define the log discrepancies.)

We consider the example \cite[Example 5.8]{cherik} of a non-normal surface $(S,o)$ embedded in $\A^5$ with an isolated singularity at the origin. We use coordinates $z_1,\hdots,z_5$ on $\A^5$. The surface $(S,o)$ is defined by the following three equations:
$$z_1^2 =z_2^3, \quad z_1z_5=z_2z_3z_4 \quad \text{ and }\quad  z_4^3=z_5^2.$$
A resolution of $(S,o)$ is given by $F:\A^2 \rightarrow S:(x,y) \mapsto (y^3,y^2,xy,x^2,x^3)$.
On this surface $(S,o)$ we consider the linear $2$-form
\begin{align*}
	\omega & = \alpha_1 dz_1 \wedge dz_2 + \alpha_2 dz_1 \wedge dz_3 + \alpha_3 dz_1 \wedge dz_4 + \alpha_4 dz_1 \wedge dz_5 + \alpha_5 dz_2 \wedge dz_3\\
	& + \alpha_6 dz_2 \wedge dz_4 + \alpha_7 dz_2 \wedge dz_5 + \alpha_8 dz_3 \wedge dz_4 + \alpha_9 dz_3 \wedge dz_5 + \alpha_{10} dz_4 \wedge dz_5,
\end{align*}
where the $\alpha_i$ are generic complex coefficients.

\begin{example}
	Take $f=z_5-z_2$. The inverse image of $V(f)$ on $(S,o)$ by $F$ is defined by $x^3-y^2$. We compute the pullback $F^*\omega$ via
	\begin{align*}
		dz_1 \wedge dz_2 & \leadsto d(y^3) \wedge d(y^2) = (3y^2dy) \wedge (2ydy) = 0,\\
		dz_1 \wedge dz_3 & \leadsto (3y^2dy) \wedge (ydx+xdy) = -3y^3dx \wedge dy,\\
		dz_1 \wedge dz_4 & \leadsto (3y^2dy) \wedge (2xdx) = -6xy^2dx \wedge dy,\\
		dz_1 \wedge dz_5 & \leadsto (3y^2dy) \wedge (3x^2dx) = -9x^2y^2dx \wedge dy,\\
		dz_2 \wedge dz_3 & \leadsto (2ydy) \wedge (ydx+xdy) = -2y^2dx \wedge dy,\\
		dz_2 \wedge dz_4 & \leadsto (2ydy) \wedge (2xdx) = -4xydx \wedge dy,\\
		dz_2 \wedge dz_5 & \leadsto (2ydy) \wedge (3x^2dx) = -6x^2ydx \wedge dy,\\
		dz_3 \wedge dz_4 & \leadsto (ydx+xdy) \wedge (2xdx) = -2x^2dx \wedge dy,\\
		dz_3 \wedge dz_5 & \leadsto (ydx+xdy) \wedge (3x^2dx) = -3x^3dx \wedge dy,\\
		dz_4 \wedge dz_5 & \leadsto (2xdx) \wedge (3x^2dx) =0,
	\end{align*}
	such that this pullback equals
\begin{equation}\label{pullbackform}
	-(3\alpha_2y^3+6\alpha_3xy^2+9\alpha_4x^2y^2+2\alpha_5y^2+4\alpha_6xy+6\alpha_7x^2y+2\alpha_8x^2+3\alpha_9x^3)dx \wedge dy.
\end{equation}
	We have not yet obtained an embedded resolution, since $V(x^3-y^2)$ is still singular in the origin of $\A^2$. After further blow-ups, we obtain an embedded resolution as shown in Figure \ref{figure16}.
	There are no poles of order $2$ and two non-trivial candidate poles, namely $s_0=-\frac{\nu_1}{N_1} = -2$ and $s_0 = -\frac{\nu_3}{N_3} = -\frac{3}{2}$. One can calculate that the residue of $s_0 = -\frac{3}{2}$ is zero and that the one of $s_0 = -2$ is non-zero. The pole $s_0 = -2$ induces the trivial monodromy eigenvalue $1$.
	
\end{example}
\begin{figure}[h]
	\centering
	\begin{subfigure}[t]{0.48\textwidth}
		\centering
		\begin{tikzpicture}
			\begin{axis}[
				width=2.5\textwidth,
				height=7cm,
				xmin=-1, xmax=10,
				ymin=-1.5, ymax=1.5,
				samples=100,
				hide axis,
				]
				\addplot [black] table {
					0 0
					2 0
				} node[right] {$E_1 (2,4)$};
				\addplot [black] table {
					0 0.7
					2 0.7
				} node[right] {$E_0 (1,1)$};
				\addplot [black] table {
					0 -0.7
					2 -0.7
				} node[right] {$E_2 (3,5)$};
				\addplot [black] table {
					1 1
					1 -1
				} node[below] {$E_3 (6,9)$};
				\addplot [dashed,hot] table {
					0.3 0.3
					0.3 -0.3
				} node[below,hot] {$(0,2)$};
				\addplot [dashed,hot] table {
					0.6 -0.3
					0.6 0.3
				} node[above,hot] {$(0,2)$};
			\end{axis}
		\end{tikzpicture}
		\caption{For $z_5 - z_2$}
		\label{figure16}
	\end{subfigure}
	\hfill
	\begin{subfigure}[t]{0.48\textwidth}
		\centering
		\begin{tikzpicture}
			\begin{axis}[domain=-1:1,
				width=2.5\textwidth,
				height=7cm,
				xmin=-1, xmax=10,
				ymin=-1.5, ymax=1.5,
				samples=100,
				hide axis,
				]
				\addplot [black] table {
					0 -0.75
					2 -0.75
				} node[right] {$E_0 (1,1)$};
				\addplot [black] table {
					0 0
					2 0
				} node[right] {$E_3 (8,9)$};
				\addplot [black] table {
					1.5 0.25
					1.5 -0.25
				} node[below] {$E_2 (4,5)$};
				\addplot [black] table {
					1 1
					1 -1
				} node[below] {$E_4 (12,13)$};
				\addplot [black] table {
					0 0.75
					2 0.75
				} node[right] {$E_1 (3,4)$};
				\addplot [dashed,hot] table {
					0.6 0.45
					0.6 1.05
				} node[above,hot] {$(0,2)$};
				\addplot [dashed,hot] table {
					0.3 1.05
					0.3 0.45
				} node[below,hot] {$(0,2)$};
			\end{axis}
		\end{tikzpicture}
		\caption{For $z_1 - z_4^2$}
		\label{figure17}
	\end{subfigure}
	\caption{Intersection diagrams}
	\label{combinedfig}
\end{figure}
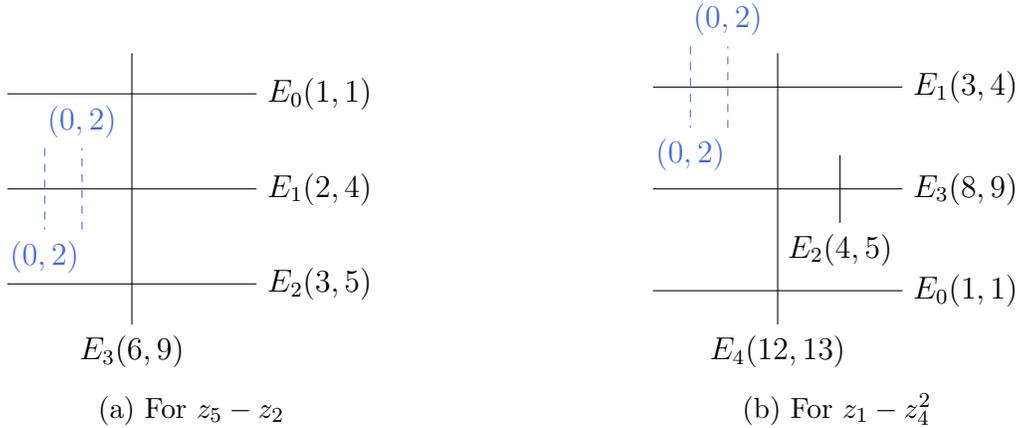

\begin{example}
	Let us consider a more interesting example, namely $f = z_1-z_4^2$. The strict transform is $V(y^3-x^4)$ on $\A^2$. The calculations for the pullback $F^* \omega$ are exactly the same as in the previous example, we obtain
the form (\ref{pullbackform}).
	After further blow-ups, we obtain the resolution graph as pictured in Figure \ref{figure17}. We are interested in the candidate pole $s_0 = -\frac{\nu_1}{N_1} = -\frac{4}{3}$. Its residue is easily computed to be non-zero, hence it is a pole. We compute the monodromy eigenvalues of $f$ using A'Campo's formula (Theorem \ref{A campo}):
	\begin{align*}
		\prod_{j =1}^4 (t^{N_j}-1)^{-\chi(E_j^\circ)} & = \frac{t^{12}-1}{(t^3-1)(t^4-1)} = \frac{\Phi_6 \Phi_{12}}{\Phi_1}.
	\end{align*}
	This means that the pole $s_0 = -\frac{4}{3}$ does not induce a monodromy eigenvalue. We conclude that there is again no possible generalization of the monodromy conjecture in this case.
	
\end{example}

\section{Plane curves}\label{planecurves}
We now take the affine plane as surface $S$ and functions $f \in \C[x,y] \setminus \C$. We will consider on $\A^2$ natural forms $g dx\wedge dy$, where $g$ is intrinsically associated to $f$. First we explore the case when $g$ is the hessian of $f$, and then when $g$ is a  generic polar of $f$.

\subsection{Hessian}
We take $\omega= d\left(\diffp{f}{x}\right) \wedge d\left(\diffp{f}{y}\right) = \text{Hess}(f) dx \wedge dy$, where

$$ \text{Hess}(f) =
 \begin{vmatrix}
		\diffp[2]{f}{x} & \diffp{f}{{x}{y}}\\
		\diffp{f}{{y}{x}} & \diffp[2]{f}{y}
	\end{vmatrix}.
$$

\begin{example}
Take  $f = x^3-y^2$; then
	$g=\text{Hess}(f)= -12x$.
	Hence, we work with the $2$-form $\omega = -12x dx \wedge dy$. The minimal embedded resolution of the cusp is already an embedded resolution of $V(fg)$. We have the intersection diagram as in Figure \ref{figure18}. Observe that there are no poles of order $2$ and one non-trivial candidate pole, namely $s_0 = -\frac{\nu_3}{N_3} = -\frac{7}{6}$. Compute
	$$\alpha_0 = \nu_0-\frac{7}{6} \cdot N_0 = -\frac{1}{6},\quad \alpha_1 = \nu_1-\frac{7}{6} \cdot N_1 = \frac{2}{3}, \quad \alpha_2 = \nu_2-\frac{7}{6} \cdot N_2 = \frac{1}{2},$$
	such that the residue of $s_0 = -\frac{7}{6}$ equals
	$\frac{1}{N_3} \left(\frac{1}{\alpha_0} + \frac{1}{\alpha_1}+ \frac{1}{\alpha_2}-1\right) = -\frac{7}{12}$.
	Thus, for each pole $s_0 $ we have that  $e^{2\pi i s_0}$ is a monodromy eigenvalue of $f$, as $1$ and the two primitive sixth roots of unity are monodromy eigenvalues.
	
	\begin{figure}[h]
		\centering
		\begin{tikzpicture}
			\begin{axis}[
				width=20cm,
				height=6.37cm,
				xmin=-1, xmax=11.5,
				ymin=-1.5, ymax=1.5,
				samples=100,
				hide axis,
				]
				\addplot [black] table {
					2.5 0
					4.5 0
				} node[right] {$E_1 (2,3)$};
				\addplot [black] table {
					2.5 0.7
					4.5 0.7
				} node[right] {$E_0 (1,1)$};
				\addplot [black] table {
					2.5 -0.7
					4.5 -0.7
				} node[right] {$E_2 (3,4)$};
				\addplot [black] table {
					3.5 1
					3.5 -1
				} node[below] {$E_3 (6,7)$};
				\addplot [dashed,hot] table {
					3 0.3
					3 -0.3
				} node[below,hot] {$(0,2)$};
			\end{axis}
		\end{tikzpicture}
		\caption{Intersection diagram for $x^3-y^2$}
		\label{figure18}
	\end{figure}
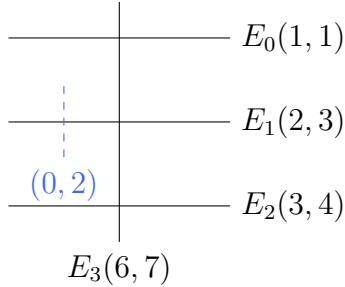
\end{example}

\begin{example}
	Take  $f = y^4-2x^3y^3+x^7-x^6y$;  then we can compute that
	\begin{align*}
		g= \text{Hess}(f) = -36x(x^9+14x^7y-4x^6y^2-14x^4y^2+5x^3y^4+10x^3y^3+4y^5)
	\end{align*}
	such that $\omega = -36x(x^9+14x^7y-4x^6y^2-14x^4y^2+5x^3y^4+10x^3y^3+4y^5) dx \wedge dy$. For this example, the calculations are more difficult. The minimal embedded resolution of $V(f)$, shown in Figure \ref{figure27}, is not an embedded resolution of $V(fg)$ yet. We need to perform further blow-ups, creating $E_6,E_7$ and $E_8$,  obtaining the intersection diagram as in Figure \ref{figure28}. We are interested in the candidate pole arising from $E_8$, namely  $s_0 = -\frac{\nu_8}{N_8}=-\frac{15}{8}$. Its residue equals $\frac{5}{96}$, hence it is a pole. Using A'Campo's formula (Theorem \ref{A campo}), we calculate the monodromy eigenvalues:
	\begin{align*}
		\prod_{j =1}^5 (t^{N_j}-1)^{-\chi(E_j^\circ)} & = \frac{t^{28}-1}{(t^4-1)(t^7-1)} = \frac{\Phi_{14}\Phi_{28}}{\Phi_1}.
	\end{align*}
	Note that we could use both the intersection diagrams (from Figure \ref{figure27} and Figure \ref{figure28}) to calculate the monodromy eigenvalues. Hence, $e^{2\pi i s_0}$ is not a monodromy eigenvalue for the pole $s_0 = -\frac{15}{8}$. This example provides a counterexample to a possible adaptation of the monodromy conjecture for the Hessian $2$-form.
	
	\begin{figure}[h]
		\centering
		\begin{subfigure}[b]{\textwidth}
			\centering
			\begin{tikzpicture}
				\begin{axis}[domain=-1:1, restrict y to domain=-1:1,
					width=20cm,
					height=6.37cm,
					xmin=-1, xmax=10,
					ymin=-1.5, ymax=1.5,
					samples=100,
					hide axis,
					]
					\addplot [black] table {
						3.5 -0.5
						8.5 -0.5
					} node[right] {$E_5$};
					\addplot [black] table {
						4 1
						4 -1
					} node[below] {$E_4$};
					\addplot [black] table {
						6 1
						6 -1
					} node[below] {$E_0$};
					\addplot [black] table {
						8 1
						8 -1
					} node[below] {$E_2$};
					\addplot [black] table {
						3.3 0
						4.3 1
					} node[above] {$E_3$};
					\addplot [black] table {
						3.7 0
						2.7 1
					} node[above] {$E_1$};
				\end{axis}
			\end{tikzpicture}
			\caption{Intersection diagram for $y^4-2x^3y^3+x^7-x^6y$}
			\label{figure27}
		\end{subfigure}
		\medskip
		
		\begin{subfigure}[b]{\textwidth}
			\centering
			\begin{tikzpicture}
				\begin{axis}[domain=-1:1, restrict y to domain=-1:1,
					width=20cm,
					height=6.37cm,
					xmin=-1, xmax=10,
					ymin=-1.5, ymax=1.5,
					samples=100,
					hide axis,
					]
					\addplot [black] table {
						3.5 -0.5
						8.5 -0.5
					} node[right] {$E_5 (28,45)$};
					\addplot [black] table {
						4 1
						4 -1
					} node[below] {$E_4 (20,33)$};
					\addplot [black] table {
						6 1
						6 -1
					} node[below] {$E_0 (1,1)$};
					\addplot [black] table {
						8 1
						8 -1
					} node[below] {$E_2 (7,12)$};
					\addplot [black] table {
						8.5 0
						7.5 1
					} node[above] {$E_7 (14,27)$};
					\addplot [black] table {
						8.2 0
						9.2 1
					} node[above] {$E_6 (7,14)$};
					\addplot [black] table {
						3.3 0
						4.3 1
					} node[above] {$E_3 (12,21)$};
					\addplot [black] table {
						3.7 0
						2.7 1
					} node[above] {$E_8 (16,30)$};
					\addplot [black] table {
						3.2 1
						2.2 0
					} node[below] {$E_1 (4,8)$};
					\addplot [domain=0.25:-0.25,dashed,hot] (x+2.5,-x+0.25) node[above,hot] {$(0,2)$};
					\addplot [domain=-0.25:0.25,dashed,hot] (x+3.3,x+0.5) node[above,hot] {$(0,2)$};
					\addplot [domain=0.25:-0.25,dashed,hot] (x+7.6,x+0.75) node[below,hot] {$(0,2)$};
				\end{axis}
			\end{tikzpicture}
			\caption{Intersection diagram for $y^4-2x^3y^3+x^7-x^6y$ and its Hessian}
			\label{figure28}
		\end{subfigure}
		\caption{Comparison of intersection diagrams}
		\label{fig:intersection-comparison}
	\end{figure}
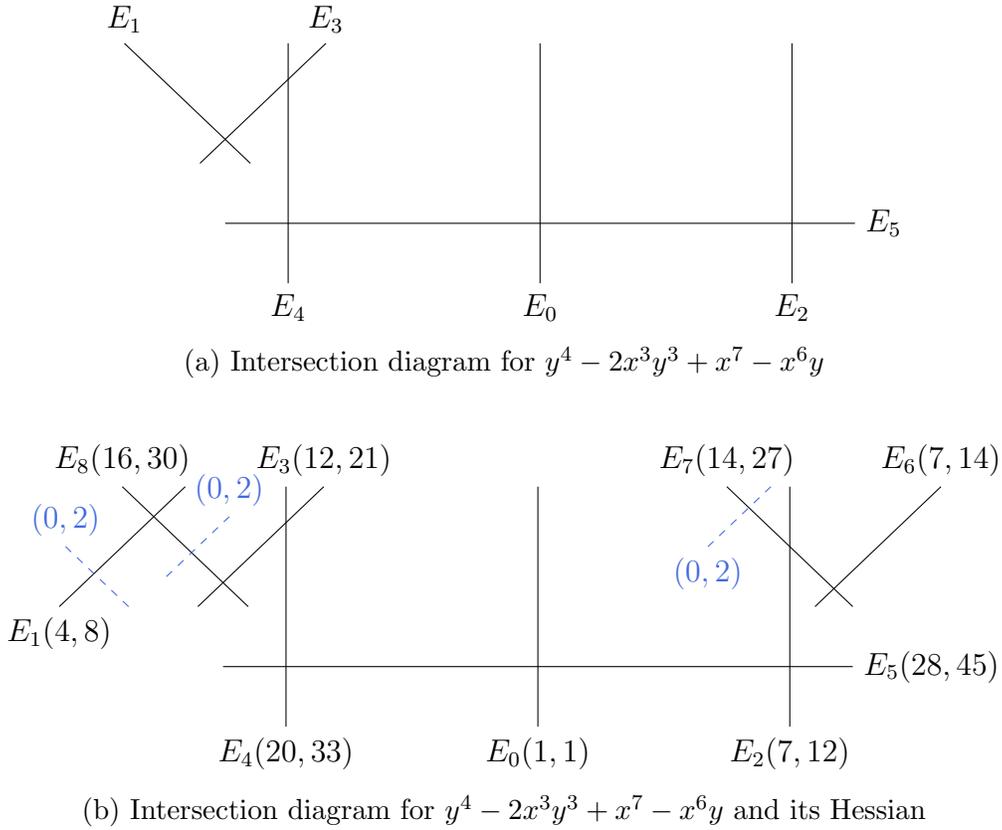
\end{example}

\subsection{Polar curves} \label{polar curves}

A very natural possible adaptation of the monodromy conjecture is using the $2$-form $\omega = df \wedge dl$, where  $l$ is a  generic linear function.
Writing $l=\alpha x + \beta y$,  where $\alpha$ and $\beta$ are generic complex coefficients, we have that
$$df \wedge dl = \left(\diffp{f}{x}dx +\diffp{f}{y}dy\right) \wedge (  \alpha dx + \beta dy)
 =\left(\beta \diffp{f}{x} - \alpha \diffp{f}{y}\right) dx \wedge dy.$$
The curve, appearing as the zero set of $df \wedge dl$, is the well-known polar curve of $f$ with respect to $l$. In the sequel, we call this curve \lq the polar curve\rq, without mentioning the generic linear function $l$, to simplify terminology.

\begin{example}\label{ordinary cusp}
	Take $f = x^3-y^2$;  then $\omega = df \wedge dl = (3\beta x^2+2\alpha y)dx \wedge dy$. Let $g= 3\beta x^2+2\alpha y$. The minimal embedded resolution of the cusp is already an embedded resolution of $V(fg)$. We have the intersection diagram as in Figure \ref{figure1}. There are no poles of order $2$ and one non-trivial candidate pole, namely $s_0 = -\frac{\nu_3}{N_3} = -\frac{4}{3}$. We calculate
	$$\alpha_0 = \nu_0-\frac{4}{3} \cdot N_0 = -\frac{1}{3}, \quad \alpha_1 = \nu_1-\frac{4}{3}\cdot N_1 = \frac{1}{3}, \quad \alpha_2 = \nu_2 - \frac{4}{3}\cdot N_2 = 1,$$
	which implies by Proposition \ref{contribution} that $s_0= -\frac{4}{3}$ is no pole. (This does not mean there are no poles, there is still the trivial candidate pole $s_0 = -\frac{\nu_0}{N_0} = -1$ arising from $E_0$. In fact, here 
one computes easily that we have 	$\Zetaw{f}{s}{o}{\omega}  = \frac{1}{2(s+1)}$.)
The only pole $s_0 = -1$ induces of course the monodromy eigenvalue $1$.
	
	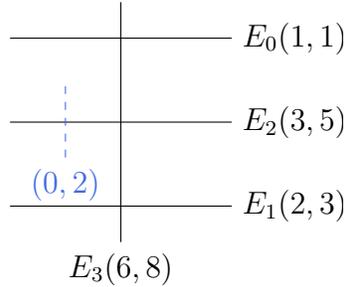
\begin{figure}[h]
		\centering
		\begin{tikzpicture}
			\begin{axis}[
				width=20cm,
				height=6.37cm,
				xmin=-1, xmax=11.5,
				ymin=-1.5, ymax=1.5,
				samples=100,
				hide axis,
				]
				\addplot [black] table {
					8.5 0
					10.5 0
				} node[right] {$E_2 (3,5)$};
				\addplot [black] table {
					8.5 0.7
					10.5 0.7
				} node[right] {$E_0 (1,1)$};
				\addplot [black] table {
					8.5 -0.7
					10.5 -0.7
				} node[right] {$E_1 (2,3)$};
				\addplot [black] table {
					9.5 1
					9.5 -1
				} node[below] {$E_3 (6,8)$};
				\addplot [dashed,hot] table {
					9 0.3
					9 -0.3
				} node[below,hot] {$(0,2)$};
			\end{axis}
		\end{tikzpicture}
		\caption{Intersection diagram for $x^3-y^2$}
		\label{figure1}
	\end{figure}
\end{example}

\begin{example}\label{example3}
	Consider a second example $f = x^5-y^3$; now $\omega = df \wedge dl = (5\beta x^4+3\alpha y^2)dx \wedge dy$. Let $g = 5\beta x^4+3\alpha y^2$. We proceed as in the previous example and obtain the intersection diagram in Figure \ref{figure10} for an embedded resolution of $V(fg)$. Again, there are no poles of order $2$ and two non-trivial candidate poles of order $1$, namely $s_0=-\frac{\nu_2}{N_2} = -\frac{7}{5}$ and $s_0=-\frac{\nu_4}{N_4} = -\frac{6}{5}$. First,
	$$\alpha' = 2-\frac{7}{5} \cdot 0 = 2, \quad \alpha_4 = \nu_4-\frac{7}{5}\cdot N_4 = -3,$$
	such that the residue of $s_0=-\frac{7}{5}$ equals
	$\frac{1}{N_2}\left(\frac{2}{\alpha'}+\frac{1}{\alpha_4}-1\right) = -\frac{1}{15},$ and hence $s_0= -\frac{7}{5}$ is a pole. Second, compute
	$$\alpha_0 = \nu_0-\frac{6}{5} \cdot N_0 = -\frac{1}{5}, \quad \alpha_2 = \nu_2-\frac{6}{5} \cdot N_2 = 1, \quad \alpha_3 = \nu_3-\frac{8}{7}\cdot N_3= \frac{1}{5},$$
	which implies by Proposition \ref{contribution} that  $s_0=-\frac{6}{5}$ is not a pole. Now, the pole $s_0=-\frac{7}{5}$ does not induce a monodromy eigenvalue. Indeed, calculating the monodromy eigenvalues using A'Campo's formula (Theorem \ref{A campo}), we get
	\begin{align*}
		\prod_{j =1}^4 (t^{N_j}-1)^{-\chi(E_j^\circ)} & = \frac{(t^{15}-1)}{(t^3-1)(t^5-1)} 
 =\frac{\Phi_{15}}{\Phi_1}.
	\end{align*}
	This example provides a counterexample to a possible adaptation of the monodromy conjecture.
	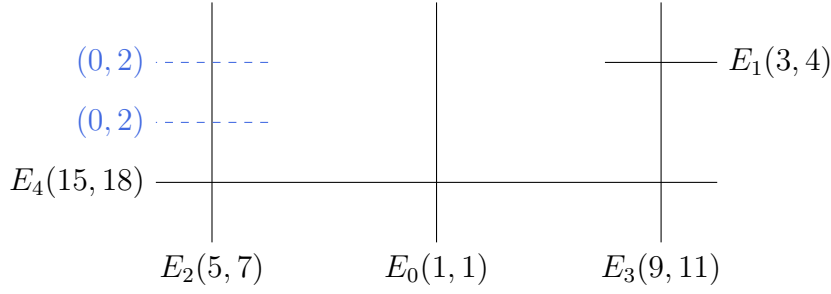
\begin{figure}[h]
		\centering
		\begin{tikzpicture}
			\begin{axis}[domain=-1:1, restrict y to domain=-1:1,
				width=18cm,
				height=6.37cm,
				xmin=-1, xmax=10,
				ymin=-1.5, ymax=1.5,
				samples=100,
				hide axis,
				]
				\addplot [black] table {
					8.5 -0.5
					3.5 -0.5
				} node[left] {$E_4 (15,18)$};
				\addplot [black] table {
					4 1
					4 -1
				} node[below] {$E_2 (5,7)$};
				\addplot [black] table {
					6 1
					6 -1
				} node[below] {$E_0 (1,1)$};
				\addplot [black] table {
					8 1
					8 -1
				} node[below] {$E_3 (9,11)$};
				\addplot [black] table {
					7.5 0.5
					8.5 0.5
				} node[right] {$E_1 (3,4)$};
				\addplot [dashed,hot] table {
					4.5 0.5
					3.5 0.5
				} node[left,hot] {$(0,2)$};
				\addplot [dashed,hot] table {
					4.5 0
					3.5 0
				} node[left,hot] {$(0,2)$};
			\end{axis}
		\end{tikzpicture}
		\caption{Intersection diagram for $x^5-y^3$}
		\label{figure10}
	\end{figure}
\end{example}

We observe that in each example one $\alpha$-value equals $1$, which explains the cancellation of that pole (see Proposition \ref{contribution}). Normally, when using the standard $2$-form $\omega =dx \wedge dy$, we know that every candidate pole arising from an exceptional component intersecting at least three other components is an actual pole, see Theorem \ref{all poles}. This is not the case in the previous two examples. 
In the next subsection, we investigate this phenomenon more generally.

\medskip

\subsection{} First, we introduce the following terminology concerning the \emph{dual graph} associated to an embedded resolution. Its vertices are the components $E_j$, $j \in T$, where the exceptional components are represented by a dot and the components of the strict transform are represented by a circle. Two vertices are connected by an edge if the corresponding components intersect. A \emph{rupture vertex} is an exceptional component intersecting at least three times other components. A \emph{dead branch} is a chain of exceptional components, which starts at a rupture vertex and ends in an exceptional component, that is not created by the first blow-up. We can represent such a dead branch in the dual graph as in Figure \ref{figure31}. It follows from the construction of the resolution graph that to every rupture vertex at most one dead branch is attached. We also add the components of the strict transform of the polar curve, represented by a blue circle, which is connected to its intersecting components by a dotted blue line.

\begin{figure}[H]
	\centering
	\begin{tikzpicture}
		\begin{axis}[domain=-1:1,
			width=15.5cm,
			height=8cm,
			xmin=0.5, xmax=9.5,
			ymin=-2.5, ymax=1,
			samples=100,
			hide axis,
			]
			\addplot [only marks] table {
				5 0
			};
			\addplot [only marks] table {
				6.4 0
			} node[below] {$E_2$};
			\addplot [black] table {
				4.2 0.5
			} node {$\hdots$};
			\addplot [black] table {
				4.2 -0.5
			} node {$\hdots$};
			\addplot [black] table {
				4.7 0
			} node {$\hdots$};
			\addplot [black] table {
				5.7 0
				6.4 0
			};
			\addplot [black] table {
				7 0
			} node {$\hdots$};
			\addplot [black] table {
				6.4 0
				6.75 0
			};
			\addplot [black] table {
				7.25 0
				7.6 0
			};
			\addplot [only marks] table {
				7.6 0
			} node[below] {$E_k$};
			\addplot [only marks] table {
				5.7 0
			} node[below] {$E_1$};
			\addplot [black] table {
				5 0
				5.7 0
			};
			\addplot [black] table {
				4.5 0.5
				5 0
			};
			\addplot [black] table {
				4.5 -0.5	
				5 0
			};
		\end{axis}
	\end{tikzpicture}
	\caption{A dead branch}
	\label{figure31}
\end{figure}
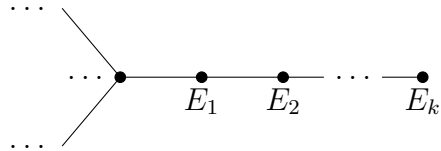

We now look for a possible general phenomenon explaining the unexpected cancellation of candidate poles in Examples \ref{ordinary cusp} and \ref{example3}. That is, given a rupture vertex and an attached dead branch as in Figure \ref{figure31}, under what condition(s) is $\alpha_1:=\nu_1-\frac{\nu}{N} N_1= 1$?
This will of course depend on
where the strict transform of the polar curve intersects the dead branch.
 In \cite{trangmichelweber}, the authors explore where the strict transform of the polar curve intersects the dual graph.

\begin{theorem}[{\cite[Théorème 2.1(4)]{trangmichelweber}}] \label{polar curve thm}
	In the dual graph of the minimal embedded resolution of $f$, the strict transform of the polar curve intersects every dead branch.
\end{theorem}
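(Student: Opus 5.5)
The plan is to compare the pulled-back polar form with the logarithmic form $\eta = \frac{df}{f}\wedge\frac{dl}{l}$ on the minimal embedded resolution $h:Y\to U$ of $f$. Write $N_j$ and $M_j$ for the multiplicities of $E_j$ in $\Div{f\circ h}$ and $\Div{l\circ h}$, respectively. Since $l$ is generic, the strict transform of $V(l)$ only meets the first exceptional component, and it does so at a general point.

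\emph{Local residue computation.} At a general point of an exceptional $E_j$ choose local coordinates $(u,v)$ with $E_j=\{u=0\}$, and write $f\circ h=u^{N_j}a$ and $l\circ h=u^{M_j}b$ with $a,b$ units. Then
$$h^*\eta=\frac{du}{u}\wedge\Big(N_j\frac{db}{b}-M_j\frac{da}{a}\Big)+\frac{da}{a}\wedge\frac{db}{b}.$$
So $h^*\eta$ has at most a simple pole along $E_j$. Its residue is the logarithmic differential $d\log\varphi_j$ on $E_j$, where $\varphi_j=(l^{N_j}/f^{M_j})\circ h|_{E_j}$. Define $c_j\ge 0$ by
$$\operatorname{mult}_{E_j}h^*(df\wedge dl)=N_j+M_j-1+c_j.$$
Then $c_j=0$ exactly when the residue $d\log\varphi_j$ is not identically zero. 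Away from the other components, the strict transform $P$ of the polar curve meets $E_j$ exactly at the zeros of this residue.

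\emph{The end of the dead branch.} Let $E_k$ be the end of the chain in Figure \ref{figure31}. It is rational, meets only $E_{k-1}$, and is not the first exceptional curve. Hence $\varphi_k$ is a rational function on $\Pm^1$ whose zeros and poles all lie at the single point $E_k\cap E_{k-1}$. Such a function is constant, so its residue vanishes and $c_k>0$.

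\emph{The rupture vertex $E$.} Here I would argue that $\varphi_E$ is nonconstant, so that $c_E=0$. The zero and pole orders of $\varphi_E$ at its intersection points are the determinants $N_EM_i-M_EN_i$. These cannot all vanish, because the ratios $M/N$ along the branches from $E$ toward the first blow-up differ from those on the other branches. I expect this to be checkable from Proposition \ref{algoritme}, but it needs care.

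\emph{Propagation along the chain.} Next I would apply Lemma \ref{num lemma}(2) to each component of the chain, once with the form $\omega=df\wedge dl$ (whose divisor contains $P$) and once using (1) for $N$ and $M$. Subtracting the two gives, for each chain component $E_j$ with $\kappa_j=-E_j^2$,
$$\kappa_jc_j=\sum_{E_i\text{ adjacent to }E_j}c_i-P\cdot E_j.$$
The term $P\cdot E_j$ appears because the polar contributes $(0,2)$-type components that are absent for $f$ and $l$. The $\nu$-part of $\eta$ is handled by the adjunction term $2-2g_j$, which cancels exactly because $\eta$ is logarithmic.

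Now suppose $P$ misses the whole chain. Then $c$ satisfies the homogeneous system $(-E_i\cdot E_j)\,c=0$ on the chain $E_1,\dots,E_k$, with boundary value $c_E=0$. The intersection matrix of the chain is negative definite, so $c\equiv 0$ on the chain. This contradicts $c_k>0$, so $P$ meets some $E_j$ of the dead branch, as claimed in Theorem \ref{polar curve thm}.

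\emph{Main obstacle.} I expect the hardest step to be the precise form of the displayed identity: correctly accounting for the intersection of $P$ with the rupture vertex itself and with strict transforms, and confirming $c_E=0$ at the rupture vertex. If the direct approach fails, I would fall back on the sign monotonicity of $N_jM_{j+1}-M_jN_{j+1}$ along the chain, obtained from the recursion in Proposition \ref{algoritme}.
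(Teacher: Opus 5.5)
The paper does not prove this statement; it is quoted from L\^e--Michel--Weber, so your argument has to stand on its own. Your set-up is sound: the local form of $h^*\eta$, $c_j\ge 0$, and $c_k\ge 1$ at the end of the dead branch are all correct. However, the displayed identity that drives your contradiction is false.

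Regard $h^*\eta$ as a section of $K_Y+D$, where $D$ is the reduced total transform of $V(fl)$. Its divisor is $\sum_i c_iE_i+\tilde P$. Log adjunction gives
\[
\kappa_jc_j=\sum_{E_i\text{ adjacent to }E_j}c_i+\tilde P\cdot E_j+2-2g_j-r_j ,
\]
where $r_j$ is the number of components of $D$ meeting $E_j$. Equivalently, subtract Lemma~\ref{num lemma}(1) for $N$ and for $M$ from Lemma~\ref{num lemma}(2) for $\omega=df\wedge dl$. So the polar term has the opposite sign, and the adjunction term does not cancel: it is $0$ on interior chain components but $1$ at the end $E_k$.

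Check this on the cusp of Example~\ref{ordinary cusp}. At $E_2$ one has $\kappa_2=2$, $c_2=1$, $c_3=0$ and $\tilde P\cdot E_2=1$, so your identity reads $2=-1$. In fact your system is inconsistent in every case. With $c_E=0$ it gives $c=-A^{-1}(\tilde P\cdot E_j)_j\le 0$, hence $c\equiv 0$. This contradicts $c_k>0$ whether or not $P$ meets the chain, so the contradiction carries no information.

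With the correct identity, $P$ missing the chain and $c_E=0$ yield $Ac=e_k$. This has the positive solution $c=A^{-1}e_k$, so negative definiteness alone gives nothing. The strategy can be rescued by integrality. On a dead branch of the minimal resolution $\kappa_j\ge 2$, so the $\Delta_{1,j}$ strictly increase. Hence $c_k=(A^{-1})_{kk}=\Delta_{1,k-1}/\Delta_{1,k}$ lies strictly between $0$ and $1$, contradicting $c_k\in\mathbb{Z}_{\ge 1}$.

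The second gap is $c_E=0$, which you only state as an expectation. It is indispensable: for $k=1$ and $\kappa_1=2$, the values $c_E=c_1=1$ are consistent with $P$ missing the chain. Here is one proof.
\begin{itemize}
\item Suppose $\varphi_E$ were constant. Then $E$ meets neither $\tilde F$ nor $\tilde L$, since these give orders $-M_EN_i\neq 0$ and $N_E\neq 0$. So $E$ is not the root.
\item Set $q=M_E/N_E$ and $\psi=M-qN$. Then $\psi$ vanishes on $E$ and on all its neighbours, because the order of $\varphi_E$ at $E\cap E_i$ is $N_E\psi_i$.
\item By Lemma~\ref{num lemma}(1), on every branch $B$ at $E$ not containing the root, $A_B\psi_B=-q\,(\tilde F\cdot E_v)_{v\in B}$.
\item Since $A_B^{-1}$ has strictly positive entries and $\psi$ vanishes at the vertex of $B$ adjacent to $E$, $B$ carries no branch of $f$.
\item All (at least two) non-root branches at $E$ would then be dead. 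This contradicts the fact that at most one dead branch is attached to a rupture vertex.
\end{itemize}
The same linear algebra shows $M_j/N_j=M_E/N_E$ along the dead branch, so in fact $c_j\ge 1$ on the whole chain.
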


The precise components in the dead branch that intersect with the strict transform of the polar curve, and conditions when these intersections are tranversal, are very hard to determine. However, in  \cite[\S6]{trangmichelweber}, the authors consider some important families of curves, where this can be done. First, we introduce some necessary results from \cite{le}, and also \cite{teissier}.

\begin{theorem}\label{W_f}
	\begin{enumerate}[wide=0pt]
		\item Let $g \in \C[x,y] \setminus \C$ be a reduced polynomial. Then there exists a dense open set $U_g \subset \Pm^1$ such that, for every linear form $l = \alpha x + \beta y$, $[\alpha:\beta] \in U_g$, the singularity of the product $g \cdot g_l$ has the same topology, where $g_l$ is the polar curve of $g$ with respect to $l$. We call this topological type the polar pair of $g$.
		
		\item Let $f \in \C[x,y] \setminus \C$ be a reduced polynomial and let $\sigma(f)$ be the $\mu$-constant stratum of $f$. There exists a Zariski dense open subset $W_f \subset \sigma(f)$ such that the polar pair is constant for all curves belonging to $W_f$.
	\end{enumerate}
\end{theorem}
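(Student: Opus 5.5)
The plan is to treat the two parts separately. Both rest on the same two facts: the polar curve of a reduced germ $g$ is the zero locus of $\beta\,\partial g/\partial x-\alpha\,\partial g/\partial y$, which depends linearly on $[\alpha:\beta]$; and the topological type of a plane curve germ is read off from finitely many discrete invariants. These invariants are the Puiseux characteristic exponents of its branches and the pairwise intersection multiplicities, or equivalently the (weighted) dual graph of the minimal embedded resolution.

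For part (1), I would first fix a coordinate choice so that the tangent cone of $g$ is transverse to the line $l=0$ for $[\alpha:\beta]$ outside a finite set. This already removes finitely many directions. The key step is then to run the embedded resolution of $g\cdot g_l$ in families over the parameter $[\alpha:\beta]$. Concretely, I would consider the family of curves $\{g\cdot g_{l}=0\}$ parametrized by $t=[\alpha:\beta]$ in an affine chart of $\mathbb{P}^1$. Since $g_l$ is linear in $t$, this is a flat algebraic family of plane curve germs at $o$. I would then apply Teissier's or Lê's equisingularity results: the Milnor number $\mu(g\cdot g_t)$ is upper semicontinuous in $t$ and takes its generic (minimal) value on a Zariski open set. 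I would also use that $\mu$-constant families of plane curves are equisingular, i.e.\ of constant topological type (Lê–Ramanujam in dimension two, or Zariski's equisingularity). One subtlety is that $g\cdot g_t$ must be reduced for generic $t$. This holds because $g_t$ and $g$ share no component for generic $t$ (a component of $g$ cannot be polar for all directions, since $g$ is reduced). It also holds because $g_t$ itself is reduced for generic $t$ by a Bertini-type argument on the pencil $\beta g_x-\alpha g_y$ away from its base locus $\{g_x=g_y=0\}=\{o\}$. The set $U_g$ is then the complement of the finitely many $t$ where $\mu$ jumps or reducedness or transversality fails.

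For part (2), I would do the same over the $\mu$-constant stratum $\sigma(f)$, viewed as an irreducible algebraic family $F(x,y,u)$, $u\in\sigma(f)$, of curves all topologically equivalent to $f$. I would form the two-parameter family $F_u\cdot (F_u)_{t}$ over $\sigma(f)\times\mathbb{P}^1$ and consider the Milnor number function $(u,t)\mapsto \mu(F_u\cdot(F_u)_t)$. It is upper semicontinuous, so its minimum is attained on a Zariski open dense subset $\Omega\subset\sigma(f)\times\mathbb{P}^1$. On $\Omega$, the $\mu$-constant-implies-equisingular argument gives constant topology. Let $W_f$ be the set of $u$ such that the fibre $\Omega_u$ is nonempty and contained in the generic set $U_{F_u}$ from part (1). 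By Chevalley constructibility together with the density of $\Omega$, this set contains a Zariski dense open subset. For such $u$, the polar pair of $F_u$ equals the constant type on $\Omega$.

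The main obstacle is the passage from constant Milnor number to constant topological type for the product $g\cdot g_t$ in a family where the curve itself varies. It also requires controlling that the generic value of $\mu$ over $\sigma(f)\times\mathbb{P}^1$ agrees with the generic value of part (1) for a general $u$. I would handle this by working with the reduced total family and invoking Lê–Ramanujam/Zariski for families of reduced plane curves over a smooth (after shrinking) base.
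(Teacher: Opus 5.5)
The paper does not prove this statement; it quotes it from Lê \cite{le} and Teissier \cite{teissier}. Your argument is correct and is essentially the standard one behind those citations. Your key input, that a $\mu$-constant family of reduced plane curve germs has constant topological type, is precisely Lê's equisingularity criterion in \cite{le}. Combined with Zariski upper semicontinuity of $\mu(g\cdot g_t)$ along the pencil $\beta g_x-\alpha g_y$, and with your check that $g\cdot g_t$ is reduced for generic $t$, it gives (1). The transversality preliminary is superfluous but harmless.

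For (2), three remarks.

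\emph{The condition $\Omega_u\subset U_{F_u}$ is unnecessary.} It also makes your constructibility claim murky, since $U_{F_u}$ is not canonically defined. Simply take $W_f=\pi(\Omega)$, where $\pi$ is the projection $\sigma(f)\times\Pm^1\to\sigma(f)$. This set is open, because such projections are open maps, and dense. For $u\in W_f$ the fibre $\Omega_u$ is a nonempty open subset of $\Pm^1$, so it meets $U_{F_u}$. Hence the polar pair of $F_u$ is the constant type on $\Omega$.

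\emph{What you call the main obstacle is not one.} The same observation shows that the generic value of $\mu$ on the fibre over $u$ is automatically the global minimum. Constancy of $\mu$ on the connected set $\Omega$ then gives constant topology: apply the one-parameter criterion along curves joining any two points of $\Omega$, normalising them if needed.

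\emph{You need $\sigma(f)$ to be irreducible.} Otherwise the minimum of $\mu$ may be attained only on an open subset of one component, and $W_f$ would not be dense. Irreducibility does hold, for instance by smoothness of the $\mu$-constant stratum for plane curves, or by irreducibility of the equisingularity stratum in a jet space. You should state it as an input rather than assume it silently.
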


Note that it is not necessarily true that $f$ belongs to $W_f$.
We have the following result from \cite[Theorem 1]{casas} and \cite{casas2}, where we also use the appendix of \cite{trangmichelweber}.

\begin{theorem} \label{casas thm}
	\begin{enumerate}[wide=0pt]
		\item Let $f=y^b-x^a$, with $a$ and $b$ coprime and $a>b$. Consider the continued fraction expansion
		$$\frac{a}{b}=q_0+\frac{1}{q_1+\hdots+\frac{1}{q_{w-1}+\frac{1}{q_w}}},$$
		where $q_0 \geq 0$ and $q_j \geq 1$ for $j=1,\hdots,w$. Assume that the length $w$ is even. Set $s_k := \sum_{i=0}^kq_i$ for $k = 0,\hdots,w$. Then, for every curve in $W_f$, the dual graph of the minimal embedded resolution of the curve is already an embedded resolution for its polar curve, and the intersections are given by Figure \ref{figure34}. Here the numerical data of every component of the polar curve are $(0,2)$.
		\begin{figure}[H]
			\centering
			\begin{tikzpicture}
				\begin{axis}[domain=-1:1,
					width=18cm,
					height=5cm,
					xmin=0, xmax=11,
					ymin=-1.5, ymax=1.5,
					samples=100,
					hide axis,
					]
					\addplot [only marks] table {
						0.5 0
					} node[above] {$s_w$};
					\addplot [only marks, mark=o, mark options={black}] table {
						0 0
					};
					\addplot [black] table {
						0.05 0
						2.75 0
					};
					
					\addplot [only marks] table {
						1.5 0
					} node[below] {$s_{w-1}$};
					\addplot [only marks] table {
						2.5 0
					} node[above] {$s_{w-1}-1$};
					\addplot [black] table {
						3 0
					} node {$\hdots$};
					\addplot [black] table {
						3.25 0
						5.75 0
					};
					
					\addplot [dashed,hot] table {
						1.03 0.7
						1.5 0
					};
					\addplot [dashed,hot] table {
						1.97 0.7
						1.5 0
					};
					\addplot [only marks, mark=o, mark options={hot}] table {
						1 0.75
					};
					\addplot [only marks, mark=o, mark options={hot}] table {
						2 0.75
					};
					\addplot [black] table {
						1.5 0.75
					} node[hot] {$\hdots$};
					\addplot [black] table {
						1.5 1
					} node[hot] {$q_w$ times};
					
					\addplot [only marks] table {
						3.5 0
					} node[above] {$s_{2j}+1$};
					\addplot [only marks] table {
						4.5 0
					} node[below] {$s_{2j-1}$};
					\addplot [only marks] table {
						5.5 0
					} node[above] {$s_{2j-1}-1$};
					\addplot [black] table {
						6 0
					} node {$\hdots$};
					\addplot [black] table {
						6.25 0
						8.75 0
					};
					
					\addplot [dashed,hot] table {
						4.03 0.7
						4.5 0
					};
					\addplot [dashed,hot] table {
						4.97 0.7
						4.5 0
					};
					\addplot [only marks, mark=o, mark options={hot}] table {
						4 0.75
					};
					\addplot [only marks, mark=o, mark options={hot}] table {
						5 0.75
					};
					\addplot [black] table {
						4.5 0.75
					} node[hot] {$\hdots$};
					\addplot [black] table {
						4.5 1
					} node[hot] {$q_{2j}$ times};
					
					\addplot [only marks] table {
						6.5 0
					} node[above] {$s_{2}+1$};
					\addplot [only marks] table {
						7.5 0
					} node[below] {$s_{1}$};
					\addplot [only marks] table {
						8.5 0
					} node[above] {$s_{1}-1$};
					\addplot [black] table {
						9 0
					} node {$\hdots$};
					\addplot [black] table {
						9.25 0
						10.5 0
					};
					
					\addplot [dashed,hot] table {
						7.03 0.7
						7.5 0
					};
					\addplot [dashed,hot] table {
						7.97 0.7
						7.5 0
					};
					\addplot [only marks, mark=o, mark options={hot}] table {
						7 0.75
					};
					\addplot [only marks, mark=o, mark options={hot}] table {
						8 0.75
					};
					\addplot [black] table {
						7.5 0.75
					} node[hot] {$\hdots$};
					\addplot [black] table {
						7.5 1
					} node[hot] {$q_{2}$ times};
					
					\addplot [only marks] table {
						9.5 0
					} node[below] {$s_{0}+2$};
					\addplot [only marks] table {
						10.5 0
					} node[above] {$s_{0}+1$};
					
					\addplot [only marks] table {
						1.5 -1
					} node[below] {$s_w-1$};
					\addplot [black] table {
						0.5 0
						1.5 -1
					};
					\addplot [black] table {
						1.75 -1
						1.5 -1
					};
					\addplot [black] table {
						2 -1
					} node {$\hdots$};
					\addplot [black] table {
						2.25 -1
						4.75 -1
					};
					\addplot [only marks] table {
						2.5 -1
					} node[below] {$s_1+1$};
					\addplot [only marks] table {
						3.5 -1
					} node[below] {$s_0$};
					\addplot [only marks] table {
						4.5 -1
					} node[below] {$s_0-1$};
					\addplot [black] table {
						5 -1
					} node {$\hdots$};
					\addplot [black] table {
						5.25 -1
						6.5 -1
					};
					\addplot [only marks] table {
						5.5 -1
					} node[below] {$2$};
					\addplot [only marks] table {
						6.5 -1
					} node[below] {$1$};
				\end{axis}
			\end{tikzpicture}
			\caption{Dual graph}
			\label{figure34}
		\end{figure}
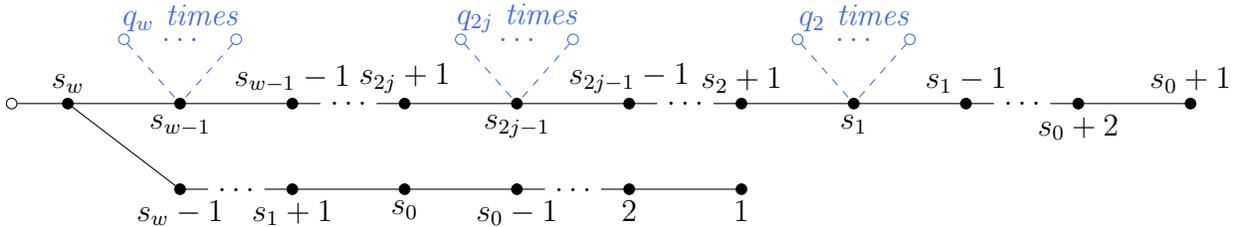
		
		\item More generally, let $f$ be unibranch with 
Zariski pairs $\{(a_1,b_1),\hdots, (a_g,b_g)\}$ ($\gcd(a_i,b_i)=1$). Assume that the length of the continued fractions of each $\frac{a_i}{b_i}$ is even. Let $B(a_i,b_i)$ be the dual graph of the minimal embedded resolution of $y^{b_i}-x^{a_i}$. Then the dual graph of the minimal embedded resolution of a curve in $W_f$ is formed by first replacing the component of the strict transform of $B(a_1,b_1)$ with the first exceptional component of $B(a_2,b_2)$ and adding the rest of $B(a_2,b_2)$, then repeating the same process with $B(a_2,b_2)$ and $B(a_3,b_3)$, and continuing this way until adding $B(a_g,b_g)$. This means that the dead branch attached to a rupture vertex is the same as in $B(a_i,b_i)$.

Again, this resolution is already an embedded resolution for the polar curve, and its strict transform intersects each dead branch as in Figure \ref{figure34}.
	\end{enumerate}
\end{theorem}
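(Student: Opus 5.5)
The plan is to treat the statement as a computation in the language of infinitely near points (clusters). We would first make the minimal embedded resolution of $y^b-x^a$ explicit through the Euclidean algorithm for $(a,b)$. The quotients $q_0,\dots,q_w$ count consecutive blow-ups at free points ($q_0$ of them) followed by alternating runs of satellite points ($q_1,q_2,\dots$). Labelling exceptional components by their order of creation gives exactly the two chains of Figure \ref{figure34}: the indices $s_0, s_0-1,\dots,1$ together with the odd-indexed breakpoints form one branch, and the even-indexed breakpoints $s_{2j}+1,\dots,s_{2j+1}$ form the other. The last component $E_{s_w}$ is the unique rupture vertex, and one of the two chains is its dead branch. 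This step is routine bookkeeping with Proposition \ref{algoritme}. It also gives the multiplicities $e_p$ of $f$ at each infinitely near point $p$: a Euclidean-type sequence with value $q_k$ repeated on the $k$-th run.

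Next we would describe the generic polar. The key input is Casas-Alvero's theorem \cite[Theorem 1]{casas}: for $f$ in the Zariski open set $W_f$ of its equisingularity stratum and $l$ generic, the polar $g=\beta f_x-\alpha f_y$ goes through the weighted cluster $K(f)$ of singular points of $f$ with virtual multiplicities $e_p-1$, and it has no other singular base points. After unloading this weighted cluster, its consistent form prescribes how the branches of the polar leave $K(f)$. Concretely, we would show that each polar branch is smooth on the resolution, meets a single exceptional component transversally at a free point, and does not pass through intersection points of exceptional components. Since the polar has multiplicity $e_O-1$ at $O$ and $g$ does not divide $f$, every polar component has numerical data $(0,2)$ (with $N=0$ and $\nu-1=1$). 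In particular the minimal resolution of $f$ already resolves $fg$.

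The main obstacle is the unloading. We need to show that the virtual multiplicities $e_p-1$ become consistent after unloading. The excess at the last free point of each even satellite run must propagate so that exactly $q_{2j}$ transversal branches leave through $E_{s_{2j-1}}$, and none leave through the odd runs. This is where the parity hypothesis on $w$ enters: it fixes which of the two chains ends at the rupture vertex and hence which breakpoints receive excess. We would check the proximity inequalities run by run, using $e_p=\sum_{q \text{ proximate to } p} e_q$ along the runs. Subtracting one from each multiplicity violates these relations only at the first point of each new run, and the violation equals the run length $q_{2j}$. As a consistency check, the total count $\sum_j q_{2j}+q_w$ of branches, weighted by their intersection multiplicities with $f$, should equal $\mu(f)+m(f)-1=(a-1)(b-1)+b-1$ (Teissier's polar formula). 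Combining this count with Theorem \ref{polar curve thm} pins down the placement.

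Part (2) then follows by induction on the number $g$ of Puiseux pairs. The cluster of a unibranch curve is the concatenation of the clusters of the $B(a_i,b_i)$ glued along the strict transform, and the unloading of $e_p-1$ is local to each block, because the virtual multiplicities at the gluing point are already consistent. Hence each dead branch receives the polar branches exactly as in Figure \ref{figure34}, using \cite{casas2} and the appendix of \cite{trangmichelweber} for the identification of $W_f$ with the generic stratum.
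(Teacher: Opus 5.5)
\textbf{There is a genuine gap in the key step.} The paper does not prove this statement itself; it quotes \cite[Theorem 1]{casas}, \cite{casas2} and the appendix of \cite{trangmichelweber}. Your proposal also rests on Casas. However, the version you extract (the polar goes through $(K(f),e_p-1)$ and has no other singular base points) does not determine the polar, and the unloading step you build on it is false.

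\textbf{Why the unloading step fails.} Write $R_k=\{p_{s_{k-1}+1},\dots,p_{s_k}\}$ ($s_{-1}=0$) for the $k$th run, of multiplicity $r_k$, where $r_0=b$, $r_{k-1}=q_kr_k+r_{k+1}$ and $r_w=1$.
\begin{itemize}
\item The points proximate to $p_{s_{k-1}}$ are those of $R_k$ together with the first point of $R_{k+1}$ (for $k<w$). An interior point of a run has only its successor proximate.
\item Since $e_p=\sum_{q\text{ proximate to }p}e_q$, and all these $q$ lie in $K$ for non-final $p$, the excess of $e_p-1$ at $p$ is $\#\{q \text{ proximate to } p\}-1\ge 0$. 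Explicitly, it is $q_k$ at $p_{s_{k-1}}$ for $1\le k\le w-1$, it is $q_w-1$ at $p_{s_{w-1}}$, and it is $0$ elsewhere.
\item So the cluster is already consistent: subtracting one never violates a proximity relation. There is nothing to unload, and the violations of size $q_{2j}$ you describe do not exist.
\end{itemize}
A generic curve through this cluster would send $q_1$ branches through $E_{s_0}$, which lies on the non-dead chain of Figure \ref{figure34}. It would send only $q_w-1$ branches through $E_{s_{w-1}}$. It would meet $f$ with multiplicity $\sum e_p(e_p-1)=\mu$, short of Teissier's $\mu+b-1$, so your own consistency check fails.

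A concrete case is $f=x^5-y^3$, which is simple, so $f\in W_f$, and has $w=2$. The cluster is $(2,1,0,0)$, and its generic member has one branch through $E_1$ and one through $E_2$. The actual polar $5\beta x^4+3\alpha y^2$ has multiplicity $2$ at $p_2$, and both branches cross $E_2$ (Figure \ref{figure10}). Both curves go through $(K,e_p-1)$ with no singular points outside $K$, so that property cannot single out the polar.

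\textbf{What is actually needed.} The generic polar of a curve in $W_f$ must have multiplicity $e_p$, not $e_p-1$, at every point of the odd runs $R_1,R_3,\dots,R_{w-1}$, and multiplicity $e_p-1$ on the even runs.
\begin{itemize}
\item With these multiplicities the excesses are exactly $q_{2j}$ at $p_{s_{2j-1}}$ and $0$ elsewhere, which is Figure \ref{figure34}.
\item The intersection number telescopes to $\mu+\sum_{k\ \mathrm{odd}}q_kr_k=\mu+r_0-r_w=\mu+b-1$.
\item For odd $w$ the same prescription has excess $-1$ at $p_{s_{w-1}}$. This is one way to see why the parity of $w$ matters.
\end{itemize}
These extra base points on the odd runs are exactly what has to be proved, and it is what Casas' result supplies. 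They do not follow from the weaker statement you quote.

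Nor can Teissier's count together with Theorem \ref{polar curve thm} recover the placement. That is one numerical identity plus a non-emptiness statement. It cannot fix which components of a dead branch containing several odd runs are met, with what multiplicities, or whether transversally.

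Part (2) inherits the same gap. A minor point: there are $q_0+1$ free points, not $q_0$, since the first point of $R_1$ is free.
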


Using Theorem \ref{casas thm}, we deduce the following result.

\begin{proposition}\label{prop1}
	\begin{enumerate}[wide=0pt]
		\item Let $f=y^b-x^a$ ($\gcd(a,b)=1$). Assume that the length of the continued fraction of $\frac{a}{b}$ is even. Let $E_0$ be the rupture vertex in the dual graph associated to the minimal embedded resolution of a curve in $W_f$, and let the dead branch attached to $E_0$ start at $E_1$. Then
		\begin{equation}\label{alpha 1}
			\alpha_1 = \nu_1-\frac{\nu_0}{N_0} \cdot N_1=1.
		\end{equation}
		
		\item More generally, let $f$ have Zariski pairs $\{(a_1,b_1),\hdots, (a_g,b_g)\}$ ($\gcd(a_i,b_i)=1)$).
		Assume that the lengths of the continued fractions of all $\frac{a_i}{b_i}$ are even. Let $E_0$ be a rupture vertex in the dual graph associated to the minimal embedded resolution of a curve in $W_f$, and let the dead branch attached to $E_0$ start at $E_1$, then
		\begin{equation}\label{alpha 2}
			\alpha_1 = \nu_1-\frac{\nu_0}{N_0} \cdot N_1=1.
		\end{equation}
	\end{enumerate}
\end{proposition}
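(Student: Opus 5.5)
The plan is to turn Lemma \ref{num lemma}(1),(2) into one linear relation per vertex, written in terms of the quantities $\beta_j := \nu_j - \frac{\nu_0}{N_0}N_j$. These are defined for \emph{every} component $E_j$, exceptional, strict transform of $f$, or strict transform of the polar curve, with the ratio $\frac{\nu_0}{N_0}$ of the rupture vertex $E_0$ held fixed. The claim is that $\beta_1=1$.

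For any exceptional $E_i$ of the minimal resolution of a curve in $W_f$ we have $g_i=0$. By Theorem \ref{casas thm} this resolution is already an embedded resolution of $V(fg)$, so Lemma \ref{num lemma} applies to it with the polar components included as neighbours. Subtracting $\frac{\nu_0}{N_0}$ times relation (1) from relation (2) gives, for each exceptional $E_i$ with $r_i$ neighbours and $\kappa_i=-E_i^2$,
\begin{equation*}
\kappa_i\beta_i=\sum_{E_j\cap E_i\neq\emptyset}\beta_j+2-r_i .
\end{equation*}
Every polar component has numerical data $(0,2)$, so $\beta=2$ for it. Also $\beta_0=0$ by definition. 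Writing $\beta_j=1+\gamma_j$, the relation becomes $\kappa_i\gamma_i=\sum_j\gamma_j+2-\kappa_i$ summed over neighbours, with $\gamma=1$ on polar components and $\gamma_0=-1$. The target is $\gamma_1=0$.

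The core step is to solve this recursion along the dead branch $E_0-E_1-\dots-E_k$. I would read off the dead branch from Figure \ref{figure34}. In the upper chain, the vertices carrying polar curves are $s_{2j-1}$ with $q_{2j}$ polar branches attached. Each of these vertices has valence $2+q_{2j}$ and self-intersection determined by the continued fraction: the vertices $s_{2j-1}$ are the ``corner'' vertices with $\kappa=q_{2j}+2$, and all others have $\kappa=2$. I would then show by induction, starting from the far end of the dead branch ($E_k$, labelled $s_0+1$) and moving toward $E_0$, that $\gamma$ vanishes identically on the dead branch. The mechanism is as follows. On a vertex with $\kappa=2$ and two exceptional neighbours the relation reads $2\gamma_i=\gamma_{i-1}+\gamma_{i+1}$, so $\gamma\equiv0$ propagates. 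At a vertex with $\kappa=q+2$ carrying $q$ polar branches the relation becomes $(q+2)\gamma_i=\gamma_{i-1}+\gamma_{i+1}+q+2-(q+2)$, and $\gamma\equiv0$ is again consistent. At the end vertex the relation is $\kappa_k\gamma_k=\gamma_{k-1}+2-\kappa_k$ with $\kappa_k=2$, so it too is consistent with $\gamma\equiv0$. The intersection matrix of the chain is negative definite, so the system restricted to the dead branch, with boundary value $\gamma_0$ entering only through the equation at $E_1$, has a unique solution. It then remains to check that the boundary term fits, namely $2\gamma_1=\gamma_0+\gamma_2+\ldots$ with the correct $\kappa_1$. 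Equivalently, one checks that the relation at $E_1$ holds with $\gamma_1=\gamma_2=0$, using $\gamma_0=-1$ and the actual value of $\kappa_1$ and the number of polar branches at $E_1$. The evenness of $w$ is exactly what puts the polar branches on the right vertices for this consistency, and in particular at the start of the dead branch whenever needed.

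The main obstacle is this bookkeeping. One has to identify precisely which vertices of the dead branch are the $s_{2j-1}$, what their self-intersections are, and how $E_1$ and its polar attachments look in terms of $q_0,\dots,q_w$. The uniqueness argument via negative definiteness is routine. If the local check at $E_1$ does not close directly, I would fall back on computing explicitly: $N_0=ab$ and $\nu_0=a+b+\operatorname{ord}_{E_0}(g)=ab+b$, so $\frac{\nu_0}{N_0}=1+\frac1a$. Then $N_1,\nu_1$ follow from Proposition \ref{algoritme}, and one verifies $\nu_1-(1+\frac1a)N_1=1$ directly. For part (2), the dead branch at each rupture vertex is that of $B(a_i,b_i)$, and the polar attachments are the same by Theorem \ref{casas thm}(2). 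The recursion on the dead branch involves only that branch and $\beta_0=0$, so the argument of part (1) applies verbatim.
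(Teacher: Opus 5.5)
Your strategy is sound and is a cleaner route than the paper's. However, the one verification you leave open is exactly where the content lies, and the data you state for it are wrong.

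\textbf{How your route differs from the paper's.} The paper writes the dead-branch relations as $AN=(N_0,0,\dots,0)^T$ and $A\nu=(\nu_0+\kappa_1-1,\kappa_2-2,\dots,\kappa_{r-1}-2,\kappa_r-1)^T$. It then solves for $N_1,\nu_1$ by Cramer's rule and reduces $\alpha_1=1$ to the determinant identity \eqref{matrix det}, proved by induction on $r$ via cofactor expansion. Your shift $\gamma_j=\beta_j-1$ amounts to observing that the right-hand side of $A(\nu-\frac{\nu_0}{N_0}N)$ is precisely $A\mathbf{1}$, with $\mathbf{1}=(1,\dots,1)^T$. Nonsingularity of $A$ then gives $\nu_j-\frac{\nu_0}{N_0}N_j=1$ for \emph{every} $E_j$ of the dead branch at once. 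Indeed \eqref{matrix det} is just the first coordinate of $A^{-1}(A\mathbf{1})=\mathbf{1}$, since $(A^{-1})_{1j}=\Delta_{j+1,r}/\Delta_{1,r}$. So no induction is needed, and you get a stronger conclusion than the paper states.

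\textbf{What is missing or wrong.} The homogeneous system closes only because the number $p_j$ of polar branches through $E_j$ satisfies $p_j=\kappa_j-2$ for $j\geq 2$ and $p_1=\kappa_1-1$.
\begin{itemize}
\item The condition at $E_1$ holds because $E_1=E_{s_{w-1}}$ has $\kappa_1=q_w+1$ by \eqref{kappavalues}. It is not $q_w+2$, as your description of the vertices $s_{2j-1}$ asserts, and not $2$, as your ``$2\gamma_1=\gamma_0+\gamma_2+\ldots$'' suggests.
\item With $\kappa_1=q_w+2$, the equation at $E_1$ would read $\kappa_1\gamma_1-\gamma_2=-1$. That forces $\gamma_1=-(A^{-1})_{11}<0$, i.e.\ $\alpha_1<1$.
\item With the correct value it reads $(q_w+1)\cdot 0=-1+0+q_w+2-(q_w+1)=0$. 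The missing unit of self-intersection at $E_1$ is exactly what absorbs $\gamma_0=-1$. You should state this explicitly rather than hedge on it.
\item The end vertex $E_{s_0+1}$ need not have $\kappa=2$. If $q_1=1$ and $w\geq 4$, it is $E_{s_1}$ with $\kappa=q_2+2$ and $q_2$ polar branches. This is still consistent with $\gamma\equiv 0$, since $p=\kappa-2$ there as well.
\end{itemize}

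Once these data are fixed, your argument is complete. The fallback via $\nu_0=ab+b$ is unnecessary; it would also require the polar multiplicities at every centre of blow-up. Your treatment of part (2) is correct as stated.
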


\begin{proof}
	\begin{enumerate}[wide]
		\item Using Theorem \ref{casas thm}, we have the dual graph as in Figure \ref{figure34} of the minimal embedded resolution of the curve (which is already an embedded resolution for its polar curve). From this graph we can deduce the self-intersection numbers of the exceptional components, see for example \cite[Section 8.5]{wall2}. Set $\kappa_j = -E_j^2$ for $j=1,\hdots,s_w-1$. Then
\begin{equation}\label{kappavalues}
\left\{ \begin{array}{ll}
			\kappa_j = \kappa_{s_k+i} = 2 &\text{ for } j=1,\hdots,q_0-1, \ k=0,\hdots,w-1 \text{ and } i=1,\hdots,q_{k+1}-1,\\
			\kappa_{s_k} = q_{k+1}+2 & \text{ for } k=0,\hdots,w-2,\\
			\kappa_{s_{w-1}} = q_{w}+1.  
		\end{array} \right.
\end{equation}
		We rename the components in the dead branch to simplify notation, see Figure \ref{figure35}.
		\begin{figure}[H]
			\centering
			\begin{tikzpicture}
				\begin{axis}[domain=-1:1,
					width=18cm,
					height=5cm,
					xmin=0, xmax=11,
					ymin=-1.5, ymax=1.5,
					samples=100,
					hide axis,
					]
					\addplot [only marks] table {
						0.5 0
					} node[above] {$E_0$};
					\addplot [only marks, mark=o, mark options={black}] table {
						0 0
					};
					\addplot [black] table {
						0.05 0
						2.75 0
					};
					\addplot [black] table {
						0.5 0
						0.5 -0.5
					};
					\addplot [black] table {
						0.5 -0.7
					} node {$\vdots$};
					
					\addplot [only marks] table {
						1.5 0
					} node[below] {$E_1$};
					\addplot [only marks] table {
						2.5 0
					} node[above] {$E_2$};
					\addplot [black] table {
						3 0
					} node {$\hdots$};
					\addplot [black] table {
						3.25 0
						5.75 0
					};
					
					\addplot [dashed,hot] table {
						1.03 0.7
						1.5 0
					};
					\addplot [dashed,hot] table {
						1.97 0.7
						1.5 0
					};
					\addplot [only marks, mark=o, mark options={hot}] table {
						1 0.75
					};
					\addplot [only marks, mark=o, mark options={hot}] table {
						2 0.75
					};
					\addplot [black] table {
						1.5 0.75
					} node[hot] {$\hdots$};
					\addplot [black] table {
						1.5 1
					} node[hot] {$\kappa_1-1$ times};
					
					\addplot [only marks] table {
						3.5 0
					} node[above] {$E_{j-1}$};
					\addplot [only marks] table {
						4.5 0
					} node[below] {$E_j$};
					\addplot [only marks] table {
						5.5 0
					} node[above] {$E_{j+1}$};
					\addplot [black] table {
						6 0
					} node {$\hdots$};
					\addplot [black] table {
						6.25 0
						7.5 0
					};
					
					\addplot [dashed,hot] table {
						4.03 0.7
						4.5 0
					};
					\addplot [dashed,hot] table {
						4.97 0.7
						4.5 0
					};
					\addplot [only marks, mark=o, mark options={hot}] table {
						4 0.75
					};
					\addplot [only marks, mark=o, mark options={hot}] table {
						5 0.75
					};
					\addplot [black] table {
						4.5 0.75
					} node[hot] {$\hdots$};
					\addplot [black] table {
						4.5 1
					} node[hot] {$\kappa_j-2$ times};
					
					\addplot [only marks] table {
						6.5 0
					} node[above] {$E_{r-1}$};
					\addplot [only marks] table {
						7.5 0
					} node[above] {$E_r$};
				\end{axis}
			\end{tikzpicture}
			\caption{Dead branch in the dual graph}
			\label{figure35}
		\end{figure}
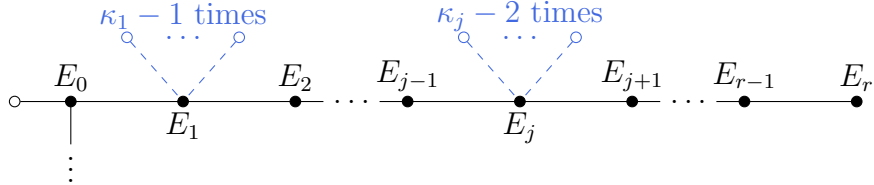
		We assume that $r >1$; if $r=1$ one can easily verify the result using Lemma \ref{num lemma}. Also from Lemma \ref{num lemma}, we have that
		$$\left\{ \begin{array}{ll}
			\kappa_{j}N_{j} - N_{j-1}-N_{j+1}= 0,\\
			\kappa_{j}\nu_{j} - \nu_{j-1}-\nu_{j+1}= \kappa_j-2,
		\end{array} \right.$$
		for $j=2,\hdots,r-1$.
In the second equation, note that, by Figure \ref{figure34} and the equalities (\ref{kappavalues}), we have that a component $E_j$ intersects the strict transform of the polar curve if and only if  $\kappa_j-2 > 0$.
		For the ends of the dead branch, $E_1$ and $E_r$, we have the following (also using Lemma \ref{num lemma}):
		$$\begin{array}{ccc}
					\left\{
					\begin{array}{l}
							\kappa_1N_{1}-N_2 = N_0,\\
							\kappa_1\nu_{1} -\nu_2=\nu_0+\kappa_1-1,
						\end{array}
					\right.
					&
					\quad \text{and} \quad
					&
					\left\{
					\begin{array}{l}
							\kappa_rN_{r} -N_{r-1}=0,\\
							\kappa_r\nu_{r} -\nu_{r-1} = \kappa_r-1.
						\end{array}
					\right.
				\end{array}$$
		Hence, we have the following equations using the intersection matrix $A= (-E_i\cdot E_j)_{1\leq i,j\leq r}$:
		$$\begin{array}{ccc}
			A\cdot  \begin{pmatrix}
			N_1\\
			N_2\\
			\vdots\\
			N_{r-1}\\
			N_r
			\end{pmatrix} = \begin{pmatrix}
			N_0\\
			0\\
			\vdots\\
			0\\
			0
			\end{pmatrix}
			&
			\quad \text{and} \quad
			&
			A\cdot \begin{pmatrix}
				\nu_1\\
				\nu_2\\
				\vdots\\
				\nu_{r-1}\\
				\nu_r
			\end{pmatrix} = \begin{pmatrix}
				\nu_0+\kappa_1-1\\
				\kappa_2-2\\
				\vdots\\
				\kappa_{r-1}-2\\
				\kappa_r-1
			\end{pmatrix}.
		\end{array}$$
		By inverting $A$,  we compute that
		$$\left\{ \begin{array}{ll}
			N_{1} = \frac{\Delta_{2,r}}{\Delta_{1,r}}N_0,\\
			\nu_{1} = \frac{\Delta_{2,r}}{\Delta_{1,r}}\nu_0+\frac{1}{\Delta_{1,r}}\big(\Delta_{2,r}(\kappa_1-1)+\Delta_{3,r}(\kappa_2-2)+\hdots+\Delta_{r,r}(\kappa_{r-1}-2)+\kappa_r-1\big),
		\end{array} \right.$$
		where $\Delta_{k,l} := \det((- E_i\cdot E_j)_{k\leq i,j \leq l})$, for $k \leq l$  (so $E_k,E_{k+1},\hdots E_l$ form a subchain in the dead branch). Then
		$$\alpha_1 = \nu_1-\frac{\nu_0}{N_0} \cdot N_1=\frac{1}{\Delta_{1,r}}(\Delta_{2,r}(\kappa_1-1)+\Delta_{3,r}(\kappa_2-2)+\hdots+\Delta_{r,r}(\kappa_{r-1}-2)+\kappa_r-1).$$
		Thus, Equality \eqref{alpha 1} is equivalent to
		\begin{equation}\label{matrix det}
			\Delta_{1,r} = \Delta_{2,r}(\kappa_1-1)+\Delta_{3,r}(\kappa_2-2)+\hdots+\Delta_{r,r}(\kappa_{r-1}-2)+\kappa_r-1.
		\end{equation}
		We prove this by induction on the size of $A$. The case $r=2$ is straightforward. We assume that the result holds for the matrix  $ (-E_i\cdot E_j)_{2\leq i,j\leq r}$.
Expanding the determinant of $A$ via the first row, we obtain (the well-known equality)
		$$\Delta_{1,r} = \Delta_{2,r}\kappa_1-\Delta_{3,r} =\Delta_{2,r}(\kappa_1-1)  + \Delta_{2,r}-\Delta_{3,r} .$$
		Applying the induction hypothesis to the second  $\Delta_{2,r}$ above, we obtain
		\begin{align*}
			\Delta_{1,r} & =\Delta_{2,r} (\kappa_1-1)
+ \big[ \Delta_{3,r}(\kappa_2-1)+\Delta_{4,r}(\kappa_3-2)+\hdots+\Delta_{r,r}(\kappa_{r-1}-2)+\kappa_r-1 \big]  -\Delta_{3,r}\\
&=\Delta_{2,r}(\kappa_1-1)+\Delta_{3,r}(\kappa_2-2)+\hdots+\Delta_{r,r}(\kappa_{r-1}-2)+\kappa_r-1.
		\end{align*}
				
		\item Using Theorem \ref{casas thm}, we know that the dead branch attached to a rupture vertex is the same as in $B(a_i,b_i)$, as well as the intersections by components of the strict transform of the polar curve. This also means that the self-intersection numbers remain the same (except for possibly the rupture vertex). Then precisely the same argument as in (1) implies (\ref{alpha 2}).
	\end{enumerate}
\end{proof}

So, Proposition \ref{prop1} \lq explains\rq\ in particular Example \ref{example3}. Is a similar result true when the length of some continued fraction is odd?  Or when $f$ is an analytically reducible polynomial?
We calculate an example for each case. We present the dual graphs and the relevant numerical data, i.e., every $\alpha$-value corresponding to a rupture vertex with an attached dead branch.

\begin{figure}[h]
	\centering
	
	\begin{subfigure}{\textwidth}
		\centering
		\begin{tikzpicture}
			\begin{axis}[domain=-1:1,
				width=18cm,
				height=10cm,
				xmin=-1.5, xmax=7.5,
				ymin=-8, ymax=3,
				samples=100,
				hide axis,
				]
				\addplot [only marks] table {
					0 -1
				} node[right,xshift=0.5cm] {$E_{11} (530,540)$};
				\addplot [black] table {
					0 -1
					-0.97 -1
				};
				\addplot [black] table {
					0 -1
					1 -2.5
				};
				\addplot [only marks, mark=o, mark options={black}] table {
					-1 -1
				} node[above] {$E_0 (1,1)$};
				\addplot [only marks] table {
					1 -2.5
				} node[below] {$E_{10} (370,377)$};
				\addplot [black] table {
					1 -2.5
					2 -2.5
				};
				\addplot [only marks] table {
					2 -2.5
				} node[above] {$E_{9} (210,214)$};
				\addplot [black] table {
					2 -2.5
					3 -2.5
				};
				\addplot [only marks] table {
					3 -2.5
				} node[below] {$E_{5} (50,51)$};
				\addplot [black] table {
					3 -2.5
					4 -2.5
				};
				\addplot [only marks] table {
					4 -2.5
				} node[above] {$E_{4} (40,41)$};
				\addplot [black] table {
					4 -2.5
					5 -2.5
				};
				\addplot [only marks] table {
					5 -2.5
				} node[below] {$E_{3} (30,31)$};
				\addplot [black] table {
					5 -2.5
					6 -2.5
				};
				\addplot [only marks] table {
					6 -2.5
				} node[above] {$E_{2} (20,21)$};
				\addplot [black] table {
					6 -2.5
					7 -2.5
				};
				\addplot [only marks] table {
					7 -2.5
				} node[below] {$E_{1} (10,11)$};
				\addplot [black] table {
					0 -1
					1 0.5
				};
				\addplot [only marks] table {
					1 0.5
				} node[above] {$E_{8} (159,163)$};
				\addplot [black] table {
					1 0.5
					2 0.5
				};
				\addplot [only marks] table {
					2 0.5
				} node[below] {$E_{7} (106,112)$};
				\addplot [black] table {
					2 0.5
					3 0.5
				};
				\addplot [only marks] table {
					3 0.5
				} node[above] {$E_{12} (159,173)$};
				\addplot [black] table {
					3 0.5
					4 0.5
				};
				\addplot [only marks] table {
					4 0.5
				} node[below] {$E_{13} (212,234)$};
				\addplot [black] table {
					4 0.5
					5 0.5
				};
				\addplot [only marks] table {
					5 0.5
				} node[above] {$E_{15} (477,529)$};
				\addplot [dashed,hot] table {
					5 0.5
					5 -0.45
				};
				\addplot [only marks, mark=o, mark options={hot}] table {
					5 -0.5
				} node[below,hot] {$(0,2)$};
				\addplot [black] table {
					5 0.5
					6 0.5
				};
				\addplot [only marks] table {
					6 0.5
				} node[below] {$E_{14} (265,294)$};
				\addplot [black] table {
					6 0.5
					7 0.5
				};
				\addplot [only marks] table {
					7 0.5
				} node[above] {$E_{6} (53,59)$};
			\end{axis}
		\end{tikzpicture}
		\caption{Dual graph for $f = y^{10}-x^{53}$}
		\label{figureB4}
	\end{subfigure}
	
	\medskip
	\begin{subfigure}{\textwidth}
		\centering
		\begin{tikzpicture}
			\begin{axis}[domain=-1:1,
				width=15cm,
				height=8cm,
				xmin=-0.25, xmax=9.75,
				ymin=-2, ymax=2,
				samples=100,
				hide axis,
				]
				\addplot [only marks] table {
					2.5 0
				} node[left] {$E_4 (14,16)$};
				\addplot [black] table {
					2.5 0
					4 0
				};
				\addplot [only marks] table {
					4 0
				} node[below] {$E_2 (6,7)$};
				\addplot [black] table {
					4 0
					5.5 0
				};
				\addplot [only marks] table {
					5.5 0
				} node[below] {$E_1 (4,5)$};
				\addplot [black] table {
					5.5 0
					7 0
				};
				\addplot [only marks] table {
					7 0
				} node[right] {$E_6 (10,12)$};
				\addplot [black] table {
					2.5 0
					1.78 0.77
				};
				\addplot [only marks, mark=o, mark options={black}] table {
					1.75 0.8
				} node[above] {$E_0 (1,1)$};
				\addplot [black] table {
					2.5 0
					1.75 -0.8
				};
				\addplot [only marks] table {
					1.75 -0.8
				} node[below] {$E_3 (7,9)$};
				\addplot [dashed,hot] table {
					0.3 -0.8
					1.75 -0.8
				};
				\addplot [only marks, mark=o, mark options={hot}] table {
					0.25 -0.8
				} node[below,hot] {$(0,2)$};
				\addplot [black] table {
					7 0
					7.75 -0.8
				};
				\addplot [only marks] table {
					7.75 -0.8
				} node[below] {$E_5 (5,7)$};
				\addplot [black] table {
					7 0
					7.72 0.77
				};
				\addplot [only marks, mark=o, mark options={black}] table {
					7.75 0.8
				} node[above] {$E_0 (1,1)$};
				\addplot [dashed,hot] table {
					7.75 -0.8
					9.2 -0.8
				};
				\addplot [only marks, mark=o, mark options={hot}] table {
					9.25 -0.8
				} node[below,hot] {$(0,2)$};
				\addplot [dashed,hot] table {
					5.5 0
					5.5 0.75
				};
				\addplot [only marks, mark=o, mark options={hot}] table {
					5.5 0.8
				} node[above,hot] {$(0,2)$};
			\end{axis}
		\end{tikzpicture}
		\caption{Dual graph for $f = x^2y^2-x^5-y^7$}
		\label{figureB5}
	\end{subfigure}
	
	\caption{Dual graphs for additional examples}
	\label{fig:combined}
\end{figure}
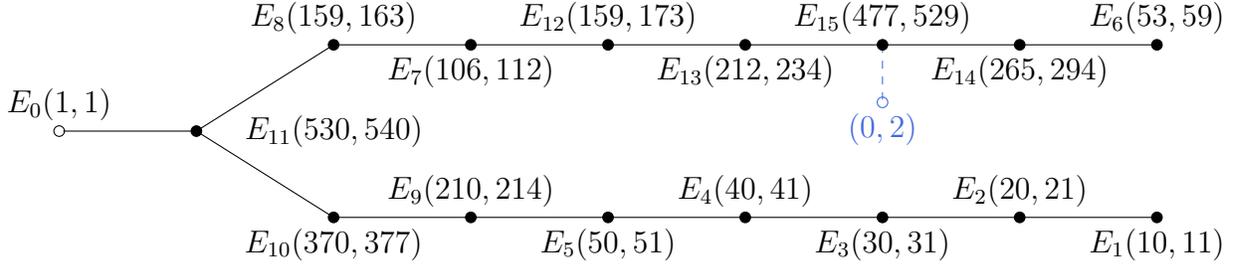
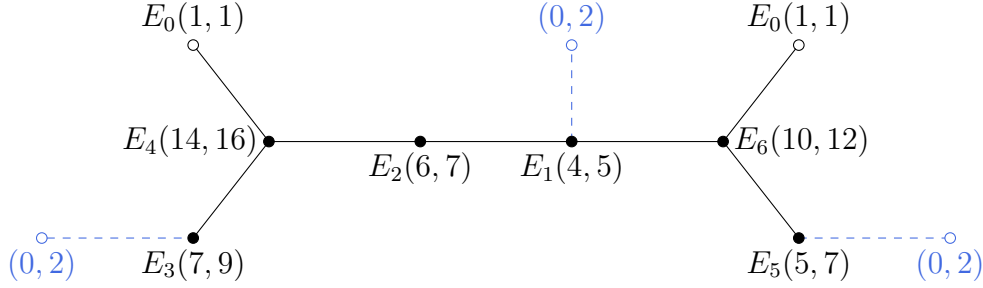

\begin{example}
	Let $f = y^{10}-x^{53}$, so $w=1$ is odd. An embedded resolution of $f$ and its polar curve is given by Figure \ref{figureB4}. Note that the minimal embedded resolution of $f$ is given by eleven blow-ups, but for an embedded resolution of $f$ and its polar curve four more blow-ups are needed, creating $E_{12},E_{13},E_{14}$ and $E_{15}$. We calculate for $E_{11}$ that
	$$\alpha_8 = \nu_8 - \frac{\nu_{11}}{N_{11}} \cdot N_8 = 163-\frac{54}{53} \cdot 159 = 1.$$
\end{example}

\begin{example}
	Let $f = x^2y^2-x^5-y^7$, so $f$ is analytically reducible (see Figure \ref{figureB5}). An embedded resolution of $f$ and its polar curve is given by Figure \ref{figureB5}. We calculate for $E_4$ that
	$$\alpha_3 = \nu_3-\frac{\nu_4}{N_4}\cdot N_3 = 9-\frac{8}{7} \cdot 7 = 1,$$
 and for $E_6$ that
	$$\alpha_5 = \nu_5-\frac{\nu_6}{N_6}\cdot N_5 = 7-\frac{6}{5} \cdot 5 = 1.$$
\end{example}

Motivated by these additional examples, we can wonder whether we have {\em always} that $\alpha_1=1$ in such a setting. 
More precisely, let $E_0$ be a rupture vertex in the dual graph associated to the minimal embedded resolution of $f$, with an attached dead branch consisting of the chain $E_1-E_2-\ldots-E_r$, where $E_1$ is attached to $E_0$.  Is then $\alpha_1=1$?

It turns out that this question is equivalent to a question about polar curves of independent interest. 
Namely, let $m_i$ denote the total order of contact between $E_i$ and the strict transform of the polar curve of $f$, for $i=1,\dots,r$. Now, the second equation of Lemma \ref{num lemma} becomes
$$\kappa_i\nu_i = \sum_{j=1}^k(\nu'_j-1)+2+m_i,$$
for $i=1,\hdots,r$ and $E_i$ intersecting exactly $k$ times other components $E'_1,\hdots,E'_k$. One can deduce from this that the equality $\alpha_1=1$ is equivalent to
	\begin{equation}\label{question}
	\Delta_{1,r} = \Delta_{2,r}m_1+\Delta_{3,r}m_2+\hdots+\Delta_{r,r}m_{r-1}+m_r +1,
	\end{equation}
in exactly the same manner as in Proposition \ref{prop1} (see the equality \eqref{matrix det}).

\begin{openquestion}
	Let $E_0$ be a rupture vertex in the dual graph associated to the minimal embedded resolution of $f$, with an attached dead branch $E_1-E_2-\ldots-E_r$, which starts at $E_1$. Let $m_i$ denote the total order of contact between $E_i$ and the strict transform of the polar curve of $f$, for $i=1,\dots,r$, and put $\Delta_{k,r} := \det((- E_i\cdot E_j)_{k\leq i,j \leq r})$, for $k=1,\dots,r$.  Then the equality \eqref{question} holds.
\end{openquestion}



\begin{thebibliography}{dFKX17}
	
\bibitem[AC75]{acampo} A'Campo, N. La fonction zêta d'une monodromie. Comment. Math. Helv. 50 (1975), 233-248.

\bibitem[Bo18]{borisov} Borisov, L. Class of the affine line is a zero divisor in the Grothendieck ring. J. Algebraic Geom. 27 (2018), 203-209.

\bibitem[Ca83]{casas} Casas, E. On the singularities of polar curves. Manuscripta Math 43 (1983), 167-190.

\bibitem[Ca90]{casas2} Casas, E. Infinitely near imposed singularities and singularities of polar curves. Mathematische Annalen 287 (1990), 429-454.

\bibitem[CNS18]{cns}  Chambert-Loir A., Nicaise J., and Sebag, J. Motivic integration, vol. 325 of Progr. Math. Birkhäuser/Springer, New York, 2018.

\bibitem[Ch24]{cherik} Cherik, Y. Lipschitz geometry of complex surface germs via inner rates of primary ideals. To appear in Ann. Inst. Fourier, preprint (2024): \url{https://arxiv.org/abs/2407.14265}.

\bibitem[DL91]{denefloeser} Denef, J. and Loeser, F. Charactéristiques d'Euler-Poincaré, fonctions zêta locales et modification analytiques. J. Amer. Math. Soc. 5 (1991), 705-720.

\bibitem[DL98]{denefloeser2} Denef, J. and Loeser, F. Motivic Igusa functions. J. Alg. Geom. 7 (1998), 505-537.

	

\bibitem[Le71]{le} Le, D.T. Sur un critère d'équisingularité. CRAS 272 Série A (1971), 138-140.

\bibitem[LMW89]{trangmichelweber} Le, D.T., Michel, F. and Weber, C. Sur le component des polaires associées aux germes de courbes planes. Compositio Mathematica tome 72, n°1 (1989), 87-113.

\bibitem[Lo88]{loeser} Loeser, F. Fonctions d'Igusa $p$-adiques et Polynômes de Bernstein. Amer. J. Math. 110 (1988), 1-21.

\bibitem[Mi68]{milnor} Milnor, J. Singular points of complex hypersurfaces. Annals of Mathematics Studies 61 (1968), iii+122 pp.
	

\bibitem[Ne22]{nemethi} Némethi, A. Normal Surface Singularities, vol. 74 of Ergebnisse der Mathematik und ihrer Grenzgebiete, Folge. 3. Springer, 2022.

\bibitem[NV12]{nemethiveys} Némethi, A. and Veys, W. Generalized monodromy conjecture in dimension two. Geom. Topol.
16 (2012), 155–217.


\bibitem[Po02]{poonen} Poonen, B. The Grothendieck ring of varieties is not a domain. Math. Res. Letters. 9 (2002), 493-497.
	
\bibitem[Ro04]{rodrigues} Rodrigues, B. On the monodromy conjecture for curves on normal surfaces. Math. Proc. Camb. Phil. Soc. 136 (2004), 313-324.

\bibitem[RV03]{bartenveys} Rodrigues, B. and Veys, W. Poles of zeta functions on normal surfaces. Proc. London Math. Soc. 87 (2003), 439-456.

\bibitem[Te77]{teissier} Teissier B. Variétés polaires I. Invariants polaires des singularités d’hypersurfaces. Invent. Math. 40 (1977) 267-292.

\bibitem[Ve95]{veys3} Veys, W. Determination of the poles of the topological zeta function for curves. Manuscripta Math. 87 (1995), 435-448.

\bibitem[Ve99]{veys2} Veys, W. The topological zeta function associated to a function on a normal surface germ. Topology 38 (1999), 439-456.

\bibitem[Ve01]{veys4} Veys W. Zeta Functions and ‘Kontsevich Invariants’ on Singular Varieties. Canadian Journal of Mathematics 53, 4 (2001), 834-865. doi:10.4153/CJM-2001-034-1

\bibitem[Ve25]{veys} Veys, W. Introduction to the Monodromy Conjecture. Springer Nature Switzerland, Cham, 2025, pp. 721-765.

\bibitem[Wa00]{wall} Wall, C.T.C. Quadratic forms and normal surface singularities. Contemporary Mathematics 272 (A.M.S., 2000), 293-311.

\bibitem[Wa04]{wall2} Wall, C.T.C. Singular Point of Plane Curves. London Mathematical Society Student Texts. Cambridge University Press, 2004.

\end{thebibliography}
\end{document}